\theoremstyle{plain}  
\newtheorem*{acknowledgements}{Acknowledgements}
\newtheorem{theorem}{\bf Theorem}
\newtheorem{lemma}{\bf Lemma}
\newtheorem{oldthm}{\bf Theorem}
\newtheorem{remark}{\bf Remark}
\renewcommand{\a}{\alpha}
\begin{document}

\title{Two-dimensional Hardy-Littlewood theorem \\for functions with general monotone Fourier coefficients}
\author{Kristina Oganesyan}
\address{Lomonosov Moscow State University, Moscow Center for Fundamental and Applied Mathematics, Centre de Recerca Matem\`atica, Universitat Aut\`onoma de Barcelona}
\email{oganchris@gmail.com}
\thanks{The work was supported by the Moebius Contest Foundation for Young Scientists and the Foundation for the Advancement of Theoretical Physics and Mathematics $``$BASIS$"$ (grant no 19-8-2-28-1).}
\date{}

\begin{abstract}
We prove the Hardy-Littlewood theorem in two dimensions for functions whose Fourier coefficients obey general monotonicity conditions and, importantly, are not necessarily positive. The sharpness of the result is given by a counterexample, which shows that if one slightly extends the considered class of coefficients, the Hardy-Littlewood relation fails.
\end{abstract}
\keywords{Fourier series, general monotone coefficients, Hardy-Littlewood theorem.}
\subjclass[2010]{42B05, 42B35}
\maketitle

\section{Introduction}
Establishing interconnections between integrability of functions and summability of their Fourier coefficients is the problem which occupies a special place in harmonic analysis. The celebrated Parseval's identity enables us to reduce a wide class of problems concerning functions to those concerning their Fourier series, and vice versa. Although we do not have such equalities for the spaces $L_p,\;p\neq 2,$ we can still obtain equivalences of norms of functions and norms of their Fourier series if we impose some additional requirements. Results of this kind are important, in the first place, due to the fact that once such a relation is found, one becomes free to choose if it is handy to deal with functions or with coefficients in this or that case, as if having Parseval's identity (see, e.g. \cite[Chs. 4--6, 12--13]{DoT} and \cite[Sec. 7]{GT} for applications). The following result by Paley \cite{P} can be considered the starting point for the research in this direction.

\begin{oldthm}[Paley, 1931]\label{paley} Let $\{\phi_n(x)\}$ be an orthonormal system on $[a,b]$ with $|\phi_n(x)|\leq M$ for all $x\in[a,b]$ and $n\in\mathbb{N}$. Then
\\
a) If $p\in(1,2],$ then for any $f\in L_p(a,b)$ with Fourier coefficients $\{c_n\}$ there holds
\begin{align}\label{p1}
\sum_{n=1}^{\infty}|c_n|^pn^{p-2}\lesssim_{p,M}\|f\|_p^p.
\end{align}
b) If $p\in[2,\infty),$ then, for any sequence $\{c_n\}$ with $\sum_{n=1}^{\infty}|c_n|^pn^{p-2}<\infty$, there exists a function $f\in L_p(a,b)$ that has $\{c_n\}$ as its Fourier coefficients and 
\begin{align}\label{p2}
\sum_{n=1}^{\infty}|c_n|^pn^{p-2}\gtrsim_{p,M}\|f\|_p^p.
\end{align}
\end{oldthm}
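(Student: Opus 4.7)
\emph{Part (a).} The plan is to obtain Paley's inequality by real interpolation between two endpoint bounds on the coefficient map $T\colon f\mapsto\{c_n\}$: the trivial $L^1\to\ell^\infty$ bound $|c_n|\le M\|f\|_1$ coming from $|\phi_n|\le M$, and the strong $L^2\to\ell^2$ bound $\sum_n|c_n|^2\le\|f\|_2^2$ given by Bessel's inequality. Real interpolation between the pairs $(L^1,\ell^\infty)$ and $(L^2,\ell^2)$, with parameter $\theta=2/p'$ and second index $p$, yields for every $p\in(1,2)$ a bounded map $T\colon L^p(a,b)\to\ell^{p',p}$, where $p'=p/(p-1)$ and $\ell^{p',p}$ is the Lorentz sequence space. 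The key identity is that, for the nonincreasing rearrangement $\{c_n^*\}$,
\[
\|\{c_n\}\|_{\ell^{p',p}}^p\asymp\sum_n (c_n^*)^p\,n^{p/p'-1}=\sum_n (c_n^*)^p\,n^{p-2},
\]
using $p/p'=p-1$. Because the weight $n\mapsto n^{p-2}$ is nonincreasing for $p\le 2$, the Hardy--Littlewood rearrangement inequality gives $\sum_n|c_n|^pn^{p-2}\le\sum_n (c_n^*)^pn^{p-2}\lesssim_{p,M}\|f\|_p^p$, proving (\ref{p1}). The endpoint $p=2$ is just Bessel.

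\emph{Part (b).} This part follows by duality from part (a) applied to the conjugate exponent $p'\in(1,2]$. Given $\{c_n\}$ with $\sum|c_n|^pn^{p-2}<\infty$, define on $L^{p'}(a,b)$ the linear functional
\[
\Lambda(g)=\sum_n c_n\,\overline{d_n(g)},\qquad d_n(g)=\int_a^b g\,\overline{\phi_n}.
\]
Hölder's inequality with the splitting $|c_n|\cdot|d_n(g)|=(|c_n|n^{(p-2)/p})\cdot(|d_n(g)|n^{-(p-2)/p})$, together with the elementary identity $-(p-2)p'/p=p'-2$, yields
\[
|\Lambda(g)|\le\Bigl(\sum_n|c_n|^pn^{p-2}\Bigr)^{1/p}\Bigl(\sum_n|d_n(g)|^{p'}n^{p'-2}\Bigr)^{1/p'}.
\]
Since $p'\in(1,2]$, the second factor is controlled by $C\|g\|_{p'}$ by part (a). Hence $\Lambda$ is a bounded linear functional on $L^{p'}(a,b)$ and by the Riesz representation theorem it is given by integration against some $f\in L^p(a,b)$ with the required norm bound; testing on $g=\phi_m$ and using orthonormality gives $\int f\,\overline{\phi_m}=c_m$, so $\{c_n\}$ is the Fourier sequence of $f$.

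\emph{Main obstacle.} The crucial point in part (a) is to recognize that real interpolation between $\ell^\infty$ and $\ell^2$ with second index $p$ produces the Lorentz space $\ell^{p',p}$, whose quasi-norm coincides up to constants with $(\sum(c_n^*)^pn^{p-2})^{1/p}$: ordinary Riesz--Thorin or the plain Marcinkiewicz theorem would only yield the weaker Hausdorff--Young bound in $\ell^{p'}=\ell^{p',p'}\supsetneq\ell^{p',p}$, which does not suffice. If one prefers to avoid abstract interpolation, the same estimate can be obtained by the classical level-set argument: decompose $f=f\,\mathbf{1}_{|f|>\lambda}+f\,\mathbf{1}_{|f|\le\lambda}$, control the distribution $\#\{n\colon c_n^*>\alpha\}$ via the two endpoint bounds applied to each piece, and integrate against $\alpha^{p-1}\,d\alpha$. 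The remaining steps --- the rearrangement comparison, the Hölder computation in (b), and the identification of $\Lambda$ as integration against an $L^p$ function --- are routine.
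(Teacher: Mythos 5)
The paper states Theorem~A as background and attributes it to Paley's 1931 paper~[P]; it does not supply a proof, so there is no in-paper argument to compare against. Your proof is the standard modern real-interpolation treatment and is correct. In part~(a) the two endpoint bounds $T\colon L^1\to\ell^\infty$ (from $|\phi_n|\le M$) and $T\colon L^2\to\ell^2$ (Bessel) are right; the parameter bookkeeping $(L^1,L^2)_{\theta,p}=L^p$ and $(\ell^\infty,\ell^2)_{\theta,p}=\ell^{p',p}$ for $\theta=2/p'$ is right; and the point you correctly flag as the crux — that the Lorentz quasi-norm of $\ell^{p',p}$ is $\bigl(\sum_n(c_n^*)^pn^{p-2}\bigr)^{1/p}$, and that replacing $c_n^*$ by $|c_n|$ only decreases the sum because $n^{p-2}$ is nonincreasing for $p\le 2$ — is exactly what closes the argument, and is where a naive Riesz--Thorin or Marcinkiewicz approach would lose to Hausdorff--Young. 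Part~(b) by duality is also sound; the Hölder splitting and the exponent identity $-(p-2)p'/p=p'-2$ check out. One small bookkeeping remark: the functional $\Lambda$ as written is conjugate-linear in $g$ (the conjugate sits on $d_n(g)$), so the Riesz representation takes the form $\Lambda(g)=\int_a^b f\,\overline{g}$, and then $c_m=\Lambda(\phi_m)=\int_a^b f\,\overline{\phi_m}$ identifies $\{c_n\}$ as the Fourier sequence of $f$, exactly as you assert. As a historical aside, Paley's original 1931 argument was more elementary and hands-on, essentially the dyadic level-set decomposition you sketch as an alternative; the interpolation route is the cleaner modern packaging of the same idea.
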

Throughout the paper, for two functions $f$ and $g$, the relation $f\gtrsim g$ (or $g\lesssim f$) means that there exists a constant $C$ such that $f(x)\geq Cg(x)$ for all $x$, and the relation $f\asymp g$ is equivalent to $f\gtrsim g\gtrsim f$ (if we write $f\gtrsim_a g$, this means that the corresponding constant is allowed to depend on $a$). From now on, we discuss Fourier series only with respect to the trigonometric system.

The ranges of $p$ in Theorem \ref{paley} are sharp, therefore to have both \eqref{p1} and \eqref{p2} true for all $p\in (1,\infty)$, one has to impose some additional requirements. Hardy and Littlewood \cite{HL} showed that if we restrict ourselves to sine or cosine series with monotone tending to zero coefficients, then both relations \eqref{p1} and \eqref{p2} hold for all $p\in(1,\infty)$. In this regard, a natural question to ask was: how much can we release the requirement of monotonicity to have 
\begin{align}\label{p}
\sum_{n=1}^{\infty}|c_n|^pn^{p-2}\asymp_p\|f\|_p^p
\end{align}
still true? This question in turn motivated creation of various extentions of the class of monotone sequences satisfying \eqref{p}. One of these classes, the so-called general monotone or just $GM$ class \cite[Th. 4.2]{T-1}, consists of all sequences $\{a_n\}$ fulfilling the condition
\begin{align}\label{gm}
\sum_{k=n}^{2n}|a_{k}-a_{k+1}|\lesssim |a_n|
\end{align}
for all $n$. Thus, now we dropped not only the monotonicity condition but even the basic requirement of positivity, keeping though some regularity of our sequences. One can see that $GM$ class can yet be generalized (see \cite[Th. 6.2(B)]{T-2} and \cite[Th. 1]{YZZ-1}) by putting a mean value on the right-hand side of \eqref{gm} instead of $|a_n|$ as follows:
\begin{align}\label{gm0}
\sum_{k=n}^{2n}|a_k - a_{k+1}| \lesssim \sum_{k=\frac{n}{\lambda}}^{\lambda n}\frac{|a_k|}{k}
\end{align}
for some $\lambda>1$ (see also \cite{FTZ} for some properties of such sequences). Note that these classes and several other ones, defined as \eqref{gm} but with some other majorants on the right-hand side, in different sources can be also called $GM$. For a comprehensive survey on the concept of general monotonicity, we refer the reader to \cite{LT}. 

One more direction of extending the obtained results (see \cite{B,H,AW,YZZ-1}) is proving them for weighted spaces. Define weighted Lebesgue spaces $L_{w(p,q)}^q,\;p,q\in(0,\infty],$ on $[-\pi,\pi]$, as the set of all measurable functions $f$ with finite norm
\begin{equation*}
\|f\|_{L_{w(p,q)}^q} :=
\begin{cases}
\Big(\int\limits_{-\pi}^{\pi} |t|^{\frac{q}{p}-1}|f(t)|^q dt\Big)^{\frac{1}{q}},\quad \text{if} \; 0 < p,q < \infty, \\
\operatorname*{ess\,sup}\limits_{t\in [-\pi,\pi]}\  |t^{\frac{1}{p}} f(t)|,\quad \text{if} \;  0 < p \leq \infty,\; q = \infty.
\end{cases}
\end{equation*}
The discrete weighted Lebesgue space $l_{w(p,q)}^q$ is to be defined in the same way. 

Now, a weighted version of relation \eqref{p} is
\begin{equation}\label{pp}
\|\{c_n\}\|_{l_{w(p',q)}^q}^q:=\sum_{n=1}^{\infty}|c_n|^qn^{\frac{q}{p'}-1}\asymp \|f\|_{L_{w(p,q)}^q}^q,
\end{equation} 
where $p'$ stands for the conjugate to $p$, that is, $1/p+1/p'=1$. Note that if we put $q=p$, we get the standard Hardy-Littlewood relation \eqref{p}. From now on, writing Hardy-Littlewood type relations we will omit the dependence on $p$ of the corresponding constants, so this dependence will be taken for granted.  The following theorem for weighted Lebesgue spaces was obtained by Sagher \cite{S}.
\begin{oldthm}[Sagher, 1976]\label{sagh}
If the sequences $\{a_n\}$ and $\{b_n\}$ are monotone and vanishing at infinity and the function $f$ has the Fourier series
\begin{equation*}
\frac{a_0}{2}+\sum_{n=1}^{\infty} (a_n \cos nx + b_n \sin nx),
\end{equation*}
then for $p\in (1,\infty),\;q\in[1,\infty],$ there holds
\begin{equation*}
\|f\|_{L_{w(p,q)}^q} \asymp \|\{a_n\}\|_{l_{w(p',q)}^q} + \|\{b_n\}\|_{l_{w(p',q)}^q}.
\end{equation*}
\end{oldthm}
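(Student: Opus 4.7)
My approach proceeds in three stages.

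I first reduce to pure cosine and pure sine series with nonnegative coefficients. Writing $f = f_c + f_s$ with $f_c(t) = a_0/2 + \sum_{n\geq 1} a_n\cos(nt)$ (even) and $f_s(t) = \sum_{n\geq 1} b_n\sin(nt)$ (odd), the identities $f_c(t) = (f(t)+f(-t))/2$, $f_s(t) = (f(t)-f(-t))/2$ together with the evenness of the weight $|t|^{q/p-1}$ give $\|f\|_{L_{w(p,q)}^q} \asymp \|f_c\|_{L_{w(p,q)}^q} + \|f_s\|_{L_{w(p,q)}^q}$. Since $\{a_n\}, \{b_n\}$ are monotone and tend to zero, we may assume $a_n, b_n\geq 0$. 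It then suffices to treat the cosine and sine equivalences separately.

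The core ingredient is the classical pointwise two-sided estimate for series with monotone nonnegative coefficients: for $t\in(0,\pi]$ and $N := \lfloor \pi/t\rfloor$,
\[
f_c(t) \asymp \sum_{n=1}^{N} a_n, \qquad f_s(t) \asymp t\sum_{n=1}^{N} nb_n.
\]
The upper bounds come from splitting at frequency $N$: the head is at most $\sum_{n\leq N} a_n$ (since $|\cos(nt)|\leq 1$), while the tail is handled by Abel summation and the Dirichlet-kernel bound $|D_k(t)|\lesssim 1/t$, yielding at most $a_N/t\asymp Na_N\leq \sum_{n\leq N}a_n$ thanks to monotonicity. The matching lower bounds are obtained by restricting the head to $n\leq cN$ with $c$ chosen so that $\cos(nt)\geq 1/2$ (respectively $\sin(nt)\geq (2/\pi)nt$) on that range, and by absorbing the resulting tail error into the positive sub-sum.

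Given these pointwise bounds, partition $(0,\pi]$ into the intervals $I_N = (\pi/(N+1), \pi/N]$, on which $t\asymp N^{-1}$ and $|I_N|\asymp N^{-2}$. Inserting the pointwise estimate discretizes the norm:
\[
\|f_c\|_{L_{w(p,q)}^q}^q \asymp \sum_{N\geq 1} N^{-q/p-1}\Bigl(\sum_{n=1}^{N} a_n\Bigr)^q.
\]
The upper bound by $\sum_n n^{q/p'-1}a_n^q$ is Hardy's classical discrete inequality with exponent $q/p+1>1$. The reverse inequality is immediate from monotonicity: $\sum_{n\leq N} a_n\geq Na_N$ gives $\sum_N N^{-q/p-1}(Na_N)^q = \sum_N N^{q/p'-1}a_N^q$. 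The sine case is analogous, with $\sum_{n\leq N} nb_n\geq cN^2 b_N$ replacing the head estimate. The endpoint $q=\infty$ is handled by replacing sums with suprema, using the direct equivalence $\sup_N N^{-1/p}\sum_{n\leq N} a_n \asymp \sup_n n^{1/p'} a_n$, valid for $a_n\downarrow 0$.

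The main obstacle is the uniform pointwise lower bound. Near $t=\pi$, cancellations can make $|f_c(t)|$ much smaller than $\sum_{n\leq N} a_n$, so the pointwise equivalence fails uniformly on all of $(0,\pi]$. The remedy is either to establish the lower estimate only on a subinterval $t\in(0,c\pi]$ and show that the "boundary" region $[c\pi,\pi]$ contributes only a harmless bounded correction (since the weight is bounded there), or to bypass the pointwise lower bound entirely via an integrated duality argument exploiting recoverability of the coefficients from $f$ via Fourier inversion against suitable test functions.
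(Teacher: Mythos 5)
The paper does not itself prove this statement: it is Sagher's theorem, cited from \cite{S}, and the closely related tool the paper actually develops (Lemma \ref{sag2}) goes through the kernel/duality route, not through pointwise bounds.

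Your reductions to pure cosine/sine series with nonnegative coefficients, the dyadic discretization over $I_N=(\pi/(N+1),\pi/N]$, Hardy's inequality for the easy direction, and the use of $\sum_{n\leq N}a_n\geq Na_N$ for the reverse are all fine once the claimed pointwise estimates are granted, and the pointwise \emph{upper} bounds $|f_c(t)|\lesssim\sum_{n\leq N}a_n$, $|f_s(t)|\lesssim t\sum_{n\leq N}nb_n$ are indeed classical. The genuine gap is the claimed pointwise \emph{lower} bound. You flag a difficulty near $t=\pi$, but the failure is not confined to a neighbourhood of $\pi$ and cannot be fixed by truncating to $(0,c\pi]$: it already occurs for $t$ of size $1/N$ with $N$ arbitrarily large. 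Take $b_n=1$ for $n\leq K$ and $b_n=0$ for $n>K$, so $f_s(t)=\tilde{D}_K(t)=\frac{\cos(t/2)-\cos((K+1/2)t)}{2\sin(t/2)}$. For $t\in I_{K/2}$, the quantity $(K+1/2)t$ lies within $O(1/K)$ of $2\pi$, hence both cosines equal $1+O(K^{-2})$ and $\tilde{D}_K(t)=O(K^{-1})$ throughout this entire dyadic interval; meanwhile your claimed lower bound reads $f_s(t)\gtrsim t\sum_{n\leq K/2}n\asymp K$. The bound is off by a factor of order $K^2$. This also explains why no choice of $c$ in the head restriction $n\leq cN$ can repair the argument: the tail error produced by Abel summation is $\asymp b_{N+1}/t\asymp Nb_{N+1}$, the head is $\asymp t\sum_{n\leq cN}nb_n\asymp Nb_{cN}$, and monotonicity alone gives no gap between $b_{cN}$ and $b_{N+1}$.

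Your alternative remedy (bypass the pointwise lower bound by an integrated duality argument) is the correct one and is in fact the actual method: one bounds $\frac{1}{N}\bigl|\sum_{n\leq N}c_n\bigr|$ by $\int|f(t)|\,|I_N(t)|\,dt$ with an explicit kernel satisfying $\|I_N\|_{l_{p,q}}\lesssim N^{-1/p}$, applies H\"older, and then interpolates to pass from $l_{p',\infty}$ to $l_{p',q}$. This is precisely what the paper's Lemma \ref{sag2} and its one-dimensional source \cite[L.~2.2]{DMT} carry out; it yields the stronger conclusion involving the monotone majorants $\sup_{k\geq m}\frac{1}{k}\bigl|\sum_{n\leq k}c_n\bigr|$, and the plain coefficient inequality then follows from $\sum_{n\leq N}a_n\geq Na_N$. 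So the skeleton of your write-up is salvageable, but the pointwise lower estimate at its heart must be replaced by the kernel estimate; without that replacement the lower bound direction is not proved.
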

It turns out that the same holds if we release the monotonicity condition in the theorem above to \eqref{gm0}, thus withdrawing the requirement of positivity. This result, along with the similar statement proved for Lorentz spaces, was given by Dyachenko, Mukanov and Tikhonov \cite{DMT}.

So, in the one-dimensional case we have quite a complete picture.

The whole scenario becomes more complicated if we step out from the one-dimensional setting to the multidimensional one, and the first question we face is to determine what we should mean by monotonicity if we deal with multiple sequences. The usual one-dimensional monotonicity is characterized by the inequalities $a_n\geq a_{n+1}$, or equivalently, $\Delta a_n:=a_n-a_{n+1}\geq 0$. These two ways of writing the same property give rise to the following fundamentally different multidimensional monotonicity concepts. Our focus will be on the two-dimensional case.

\subsection*{Monotonicity in each variable.} Likewise $a_n\geq a_{n+1}$ in one dimension, we can require  coordinatewise monotonicity, that is, in two-dimensional case the condition will be 
\begin{align}\label{dcond}
a_{mn}\leq a_{m'n'},\quad \text{for all}\;\;m\geq m',\;n\geq n'.
\end{align}
It turns out, however, that for such sequence the Hardy-Littlewood relation \eqref{p} does not hold for some values of $p>1$, namely, we have the following result proved by Dyachenko \cite{D-1,D-3}.

\begin{oldthm}[Dyachenko, 1986] a) \cite[Th. 1]{D-1} If $\{a_{mn}\}_{m,n=1}^{\infty}$ satisfying \eqref{dcond} and
\begin{align}\label{cond1}
a_{mn}\to 0,\quad \text{as}\;m+n\to\infty,
\end{align}
is the sequence of the Fourier coefficients with respect to one of the orthonormal systems $\{e^{inx}e^{imy}\}_{m,n=1}^{\infty}$, $\{\sin nx\sin my\}_{m,n=1}^{\infty}$, and $\{\cos nx\cos ny\}_{m,n=1}^{\infty}$, of a function $f$, then for any $p\in(1,\infty)$,
\begin{align*}
\sum_{m,n=1}^{\infty}a_{mn}^p(mn)^{p-2}\lesssim \|f\|_p^p.
\end{align*}
b) \cite[Cor. 2]{D-3} Let $p> 4/3$ and the sequence $\{a_{mn}\}$ satisfy \eqref{dcond} and $\sum_{m,n=1}^{\infty}a_{mn}^p(mn)^{p-2}<\infty$ (therefore, \eqref{cond1} as well). Then, for any of the systems above, there exists a function $f$ having $\{a_{mn}\}$ as its Fourier coefficients and satisfying
\begin{align}\label{bpart}
\sum_{m,n=1}^{\infty}a_{mn}^p(mn)^{p-2}\gtrsim \|f\|_p^p.
\end{align}
c) \cite[Ths. 8, 8']{D-1} For $p\in(1,4/3)$, there exists a sequence $\{a_{mn}\}$ satisfying \eqref{dcond} and \eqref{cond1} with $\sum_{m,n=1}^{\infty}a_{mn}^p(mn)^{p-2}<\infty$ such that the corresponding trigonometric series diverges by squares almost everywhere on $(0,2\pi)^2$.
\end{oldthm}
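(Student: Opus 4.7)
My overall plan is to lift the one-dimensional Hardy--Littlewood proof to rectangular boxes where both variables resonate with the Fourier indices, and handle the non-resonant tails by Abel summation applied separately in each variable. For part (a), I would reduce to the sine-sine case and, for each $(m,n)$, work on the box $E_{mn}=[\pi/(4m),\pi/(2m)]\times[\pi/(4n),\pi/(2n)]$ of measure $\asymp(mn)^{-2}$. On $E_{mn}$ one has $\sin kx\asymp kx$ and $\sin ly\asymp ly$ whenever $k\le m$, $l\le n$, and coordinatewise monotonicity \eqref{dcond} gives $a_{kl}\ge a_{mn}$, so the principal block $\sum_{k\le m,\,l\le n}a_{kl}\sin kx\sin ly$ is bounded below by $c\,a_{mn}(xy)m^2n^2\asymp a_{mn}mn$. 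The three rectangular tails (indices $k>m$ or $l>n$) are handled by one-variable Abel summation in the ``fast'' coordinate, which turns each tail into an expression bounded by $a_{m+1,l}\cdot m$ or its symmetric analogue and, summed against the small-sine factor in the other variable, is dominated by a small fraction of the principal block. Then
\[
\|f\|_p^p\ \ge\ \sum_{m,n}\int_{E_{mn}}|f|^p\ \gtrsim\ \sum_{m,n}(a_{mn}mn)^p(mn)^{-2}=\sum_{m,n}a_{mn}^p(mn)^{p-2}.
\]

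For part (b) with $p>4/3$, the plan is to show that the rectangular partial sums of $\sum a_{mn}\sin mx\sin ny$ form a Cauchy sequence in $L_p$ and identify the limit. Writing $f$ by double Abel summation as $\sum_{m,n}\Delta_{11}a_{mn}\,D_m(x)D_n(y)$ with sine-Dirichlet kernels $D_m$ and using $|\Delta_{11}a_{mn}|\le|\Delta_1 a_{mn}|+|\Delta_1 a_{m,n+1}|$ (which holds under \eqref{dcond} with no sign condition on $\Delta_{11}$), I would derive the pointwise envelope $|f(x,y)|\lesssim a_{[1/x],[1/y]}(xy)^{-1}$ up to lower-order terms, by splitting the frequency plane into four quadrants around $(1/x,1/y)$ and estimating each via Abel summation in one or both variables. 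Integrating $|f|^p$ against this envelope and changing variables $u=1/x$, $v=1/y$ reproduces $\sum a_{mn}^p(mn)^{p-2}$; the restriction $p>4/3$ enters as the critical exponent for the two-parameter Hardy-type inequality that closes the estimate, and for $p\in[2,\infty)$ one can alternatively invoke the two-dimensional analogue of Paley's Theorem~\ref{paley}(b) as a shortcut.

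Part (c) would be obtained by explicit construction: I would take $\{a_{mn}\}$ as a sum of weighted dyadic-block indicators $b_j\chi_{[2^j,2^{j+1})^2}(m,n)$ with $b_j$ chosen so that $\sum_j b_j^p 2^{2j(p-1)}<\infty$ matches the convergent Hardy--Littlewood quantity for $p<4/3$, and then argue, in the spirit of Kolmogorov-type constructions for lacunary double series, that the rectangular partial sums concentrate on sparse sets whose union exhausts $(0,2\pi)^2$ and hence force divergence by squares almost everywhere; the $4/3$ threshold appears naturally from balancing the $L_p$-mass of the two-dimensional Dirichlet kernels $D_{2^j}(x)D_{2^j}(y)$ against the sparsity of the blocks. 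The central obstacle throughout is that coordinatewise monotonicity \eqref{dcond} does not force a definite sign on $\Delta_{11}a_{mn}$, so a naive two-parameter iteration of the one-dimensional Hardy--Littlewood argument fails; this is precisely what makes $4/3$ a genuine threshold rather than an artifact of the method.
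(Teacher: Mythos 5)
Theorem~C is a prior result quoted from \cite{D-1,D-3}; the paper does not reprove it, so there is no internal proof to compare against directly. It is instructive, though, to test your sketch against the paper's own analogue, Theorem~\ref{m}(a), which is proved via Lemma~\ref{sag2}, and against the statement of part (c) itself. Doing so reveals several gaps.

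For part (b), your plan is self-defeating: if the pointwise envelope $|f(x,y)|\lesssim a_{[1/x],[1/y]}(xy)^{-1}$ held up to lower-order terms, then integrating $|f|^p$ against $(xy)^{p-2}$ and substituting $u\sim 1/x$, $v\sim 1/y$ would give $\|f\|_p^p\lesssim\sum a_{mn}^p(mn)^{p-2}$ for \emph{every} $p\in(1,\infty)$, directly contradicting part (c). The envelope does hold under Hardy monotonicity \eqref{mcond} or the $GM^2$ condition \eqref{dtcond} (Theorems~\ref{moricz} and~\ref{4}) because there $\sum_{m,n}|\Delta^{11}a_{mn}|$ is controlled; for merely coordinatewise monotone coefficients this sum can diverge, and the mixed remainder terms that survive double Abel summation, such as $\sum_{m}\Delta^{10}a_{mN}D_m(x)\sin Ny$, need not tend to zero as $N\to\infty$. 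The $4/3$ threshold in \cite{D-3} comes from sharp $L^p$ norms of the relevant products of Dirichlet-type kernels, not from a two-parameter Hardy inequality. Relatedly, your proposed part (c) sequence $a_{mn}=\sum_j b_j\chi_{[2^j,2^{j+1})^2}(m,n)$ does not satisfy \eqref{dcond}: $a_{2^j,2^{j+1}}=0$ while $a_{2^{j+1},2^{j+1}}=b_{j+1}>0$ with both indices larger.

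For part (a), the assertion that on $E_{mn}$ the rectangular tails are ``dominated by a small fraction of the principal block'' cannot be made good. Abel summation in $k$ gives $\Big|\sum_{k>m,\,l\le n}a_{kl}\sin kx\sin ly\Big|\lesssim\frac{m}{n}\sum_{l\le n}l\,a_{m+1,l}$ on $E_{mn}$ (using $\big|\sum_{k>m}\sin kx\big|\lesssim 1/x\asymp m$), while the lower bound you extract from the principal block, $\gtrsim\frac{1}{mn}\sum_{k\le m,\,l\le n}kl\,a_{kl}$, reduces to $\asymp\frac{m}{n}\sum_{l\le n}l\,a_{ml}$ precisely in the hard case where $a_{kl}$ is nearly constant in $k$ up to $k\asymp m$; the two quantities are then of the same order, with absolute constants (on the order of $\pi$ from the kernel bound versus $2/\pi$ from $\sin t\ge 2t/\pi$) working against you. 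In fact the desired pointwise lower bound $|f(x,y)|\gtrsim a_{mn}mn$ is already false in one dimension: take $a_n=1$ for $n\le N$ and $a_n=0$ otherwise, so that $f$ is a conjugate Dirichlet kernel whose zeros populate $E_m$ once $m\ll N$. The mechanism that works, and the one the paper uses for its own Theorem~\ref{m}(a), is an averaging one: coordinatewise monotonicity yields $a_{mn}\le\sup_{k\ge m,\,l\ge n}\frac{1}{kl}\Big|\sum_{s\le k}\sum_{t\le l}a_{st}\Big|$, and these rectangular averages are controlled by $\|f\|_{L_{w(p,q)}^q}$ via the weak-type estimate for the kernels $I_m(x)I_n(y)$ followed by Marcinkiewicz-type interpolation (Lemma~\ref{sag2}). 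No pointwise lower bound for $f$ is used, and none is available.
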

Note that it was shown by Fefferman \cite{F} that for any $p>1$ and any $f\in L_p(0,2\pi)^2$, the Fourier series of $f$ converges by squares almost everywhere on $(0,2\pi)^2$, thus, the third part of the theorem means that \eqref{bpart} is no longer true for $p\in(1,4/3)$. We also remark that in general $d$-dimensional case the critical value is $2d/(d+1)$ (see \cite[Th. 1, Th. 4]{D-2} and \cite[Cor. 2]{D-3}).

\subsection*{Monotonicity by Hardy.} The next approach to the multiple concept of monotonicity is to consider the monotonicity in the so-called sense of Hardy (or Hardy-Krause, see \cite{Ha} and \cite{K}, where this concept initially arises). That is, to introduce the following differences
\begin{align*}
&\Delta^{10}a_{mn}:=a_{mn}-a_{m+1,n},\qquad\Delta^{01}a_{mn}:=a_{mn}-a_{m,n+1},\\
&\Delta^{11}a_{mn}:=\Delta^{01}(\Delta^{10}a_{mn})=\Delta^{10}(\Delta^{01}a_{mn})=a_{mn}-a_{m+1,n}-a_{m,n+1}+a_{m+1,n+1},
\end{align*}
and recalling the one-dimentional condition $\Delta a_n\geq 0$, generalize it in the following way
\begin{align}\label{mcond}
\Delta^{11}a_{mn}\geq 0\quad\text{for all}\; m,n.
\end{align}
Note that under the natural requirement \eqref{cond1}, condition \eqref{mcond} implies
\begin{align*}
a_{mn}\geq 0,\quad \Delta^{10}a_{mn}\geq 0,\quad \Delta^{01}a_{mn}\geq 0.
\end{align*} 
Here comes the result obtained by M\'oricz \cite[Th. 1,2, Cor. 1]{M}.

\begin{oldthm}[M\'oricz, 1990]\label{moricz} Let $p\geq 1$ and the sequence $\{a_{mn}\}$ satisfy \eqref{cond1} and \eqref{mcond}.

a) If $\sum_{m,n=1}^{\infty}a_{mn}^p(mn)^{p-2}<\infty$, then the double sine or cosine series with coefficients $\{a_{mn}\}$ is the Fourier series of its sum $f$ and 
\begin{align*}
\sum_{m,n=1}^{\infty}a_{mn}^p(mn)^{p-2}\gtrsim \|f\|_p^p. 
\end{align*}

b) If $\{a_{mn}\}$ is the sequence of double sine or cosine Fourier coefficients of $f\in L_p$, then
\begin{align*}
\sum_{m,n=1}^{\infty}a_{mn}^p(mn)^{p-2}\lesssim \|f\|_p^p. 
\end{align*}
\end{oldthm}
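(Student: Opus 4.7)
The plan is to combine two-variable Abel summation with a dyadic decomposition of $[0,\pi]^2$. A preliminary observation, already flagged in the excerpt, is that \eqref{mcond} together with \eqref{cond1} forces $a_{mn}\ge 0$, $\Delta^{10}a_{mn}\ge 0$, $\Delta^{01}a_{mn}\ge 0$ (by summing $\Delta^{11}a\ge 0$ in the remaining index and using the decay), so the sequence is coordinatewise non-increasing and non-negative. This is the structural property that drives all subsequent kernel estimates.

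\textbf{Part (a).} Applying Abel's transformation twice to the partial sum of the sine series yields, in the limit (with boundary terms killed by \eqref{cond1}),
\begin{equation*}
f(x,y)=\sum_{m,n=1}^\infty \Delta^{11}a_{mn}\,\widetilde D_m(x)\widetilde D_n(y),\qquad \widetilde D_m(x)=\sum_{k=1}^m\sin kx,
\end{equation*}
a representation with non-negative coefficients. Partition $[0,\pi]^2=\bigcup_{k,l\ge 0}I_k\times I_l$ with $I_k=[\pi 2^{-k-1},\pi 2^{-k}]$. On each $I_k\times I_l$ split the series in $(m,n)$ according to $m\lessgtr 2^k$ and $n\lessgtr 2^l$ and estimate via $|\widetilde D_m(x)|\lesssim\min(m,x^{-1})$. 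Each of the four pieces can be resummed by a reverse Abel transformation to an expression of the form $a_{m_\ast,n_\ast}2^{k+l}$ plus controlled remainders, yielding the pointwise bound $|f(x,y)|\lesssim 2^{k+l}a_{2^k,2^l}$ on $I_k\times I_l$. Integrating and summing gives
\begin{equation*}
\|f\|_p^p\lesssim\sum_{k,l\ge 0}2^{(k+l)(p-1)}a_{2^k,2^l}^p\asymp\sum_{m,n\ge 1}a_{mn}^p(mn)^{p-2},
\end{equation*}
where the last equivalence uses coordinatewise monotonicity. The cosine case is identical, with the usual Dirichlet kernel replacing $\widetilde D_m$.

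\textbf{Part (b).} For the converse I would establish the matching pointwise lower bound. On a subset $\widetilde I_k\times\widetilde I_l\subset I_k\times I_l$ of positive measure, the factors $\sin mx\sin ny$ keep constant sign and size $\asymp 1$ simultaneously for $(m,n)$ in the dyadic block $J_k\times J_l=[2^{k-1},2^k)\times[2^{l-1},2^l)$. Coordinatewise monotonicity then gives
\begin{equation*}
\Big|\sum_{(m,n)\in J_k\times J_l}a_{mn}\sin mx\sin ny\Big|\gtrsim a_{2^k,2^l}\,2^{k+l}\quad\text{on }\widetilde I_k\times\widetilde I_l,
\end{equation*}
while the low- and high-frequency blocks in either variable are controlled by the same Abel/Dirichlet kernel estimates as in part (a), so their contributions can be absorbed into the main block after possibly shrinking $\widetilde I_k\times\widetilde I_l$. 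Raising to the $p$-th power, integrating, and summing in $(k,l)$ yields $\sum a_{mn}^p(mn)^{p-2}\lesssim\|f\|_p^p$, and in particular the coefficient series is finite whenever $f\in L_p$.

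\textbf{The main obstacle} is the treatment of the ``off-diagonal'' blocks where one frequency is much larger than the other: there $\widetilde D_m(x)$ and $\widetilde D_n(y)$ live on very different scales, so a naive tensor product of one-dimensional estimates is too crude. Extracting the correct size requires iterated partial summation that exploits $\Delta^{10}a_{mn}\ge 0$ and $\Delta^{01}a_{mn}\ge 0$ separately, not only $\Delta^{11}a_{mn}\ge 0$; checking that the resulting error terms can be absorbed into the main term uniformly in $(k,l)$ is where the bulk of the technical work lies.
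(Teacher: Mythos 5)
The paper does not prove Theorem~\ref{moricz}; it is cited from M\'oricz without an in-paper argument, so there is nothing to compare word-for-word. The closest analogues in the paper are Lemma~\ref{sag2} and the proof of Theorem~\ref{m}b), and it is telling that the paper's route there is structurally different from what you propose.

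Your part~(a) hinges on the pointwise bound $|f(x,y)|\lesssim 2^{k+l}a_{2^k,2^l}$ on $I_k\times I_l$, and that bound is false, already in one variable. For the cosine series take $a_n=1/n$ (decreasing, with $\sum a_n^p n^{p-2}=\sum n^{-2}<\infty$ for all $p\geq 1$): the sum is $-\log(2\sin(x/2))\asymp\log(1/x)$ near the origin, while $2^k a_{2^k}=1$. For the sine series take $a_n=1/n^2$: then $\sum n^{-2}\sin nx\asymp x\log(1/x)$ near $0$, whereas $2^k a_{2^k}\asymp 2^{-k}\asymp x$. The culprit is exactly the block you flag as a mere technicality: after double Abel summation the low-frequency contribution is an average of the form $\frac{1}{mn}\sum_{k\le m,\,l\le n}kl\,a_{kl}$ --- the term $I^1_{m,n}$ in the paper's \eqref{bo} --- and for monotone sequences this is \emph{not} $\lesssim mn\,a_{mn}$. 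It cannot be collapsed to a single corner coefficient pointwise; it becomes harmless only after $L_p$-integration via the discrete Hardy inequalities \eqref{Hard1}--\eqref{Hard2}, which is precisely how the paper runs the argument in Theorem~\ref{m}b). The tensor-product examples $a_{mn}=1/(mn)$ (cosine) and $a_{mn}=1/(mn)^2$ (sine) satisfy \eqref{mcond} and \eqref{cond1} and carry this failure to two variables, giving $|f(x,y)|/(2^{k+l}a_{2^k,2^l})\asymp kl\to\infty$ in the cosine case.

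Part~(b) inherits the same defect and adds a cancellation problem: even on a shrunken rectangle where the main dyadic block is $\gtrsim 2^{k+l}a_{2^k,2^l}$, the low-frequency block can be of strictly larger order with sign out of control, so it cannot be ``absorbed'' by shrinking the test region. The robust route, which for $p>1$ is exactly the paper's Lemma~\ref{sag2} specialized to $q=p$, is to prove the Sagher-type estimate
\begin{align*}
\sum_{m,n\geq 1}\Big(\sup_{k\ge m,\,l\ge n}\frac{1}{kl}\Big|\sum_{s=1}^{k}\sum_{t=1}^{l}a_{st}\Big|\Big)^p(mn)^{p-2}\lesssim\|f\|_p^p
\end{align*}
and then use $a_{mn}\le\frac{1}{mn}\sum_{s\le m,\,t\le n}a_{st}$, which is immediate from coordinatewise monotonicity. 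No pointwise lower bound on $|f|$ and no absorption are needed; the case $p=1$ in M\'oricz's statement requires a separate elementary argument but does not change this assessment of your sketch.
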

The reader can find Theorem \ref{moricz} proved for Vilenkin systems (and hence for the Walsh system) in \cite[Sec. 6.3]{W-1} and \cite{W-2}.

Condition \eqref{mcond} is quite restrictive and one of the closest generalizations of it in, say, $GM$ spirit is the following one
\begin{align*}
\sum_{m=k}^{\infty}\sum_{n=l}^{\infty}|\Delta^{11}a_{mn}|\lesssim |a_{mn}|.
\end{align*}
Note that if the sequence satisfies \eqref{mcond}, then the left-hand side above becomes just equal to $a_{mn}$. The next result \cite[Th. 6B]{DT-0} (see \cite{DT-1} for the proof) extends the one of M\'oricz.
\begin{oldthm}[Dyachenko, Tikhonov, 2007]\label{4} If a nonnegative sequence $\{a_{mn}\}$ satisfy \eqref{cond1} and the so-called $GM^2$ condition
\begin{align}\label{dtcond}
\sum_{m=k}^{\infty}\sum_{n=l}^{\infty}|\Delta^{11}a_{mn}|\lesssim |a_{kl}|+\sum_{m=k}^{\infty}\frac{|a_{ml}|}{m}+\sum_{n=l}^{\infty}\frac{|a_{kn}|}{n}+\sum_{m=k}^{\infty}\sum_{n=l}^{\infty}\frac{|a_{mn}|}{mn},
\end{align}
then the corresponding double sine, cosine, or exponential series converges everywhere on $(0,2\pi)^2$ and is the Fourier series of its sum. Besides, for any $p\in (1,\infty)$,
\begin{align*}
\sum_{m,n=1}^{\infty}a_{mn}^p(mn)^{p-2}\asymp \|f\|_p^p.
\end{align*}
\end{oldthm}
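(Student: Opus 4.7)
I would first observe that it suffices to treat the double cosine series $f(x,y) = \sum_{m,n\ge 1}a_{mn}\cos(mx)\cos(ny)$; the sine and exponential cases require only cosmetic modifications. A twofold Abel summation rewrites this as
\begin{equation*}
f(x,y) \;=\; \sum_{m,n\ge 1}\Delta^{11}a_{mn}\, \widetilde D_m(x)\widetilde D_n(y),\qquad \widetilde D_m(x)=\sum_{k=1}^m\cos(kx),
\end{equation*}
with $|\widetilde D_m(x)|\lesssim \min(m,1/|x|)$. The right-hand side of \eqref{dtcond} is tailored precisely to bound the Abel-summed series: on any compact subset of $(0,2\pi)^2$, summing $|\Delta^{11}a_{mn}|\min(m,1/|x|)\min(n,1/|y|)$ against the four $GM^2$ majorants yields absolute convergence via the classical one-dimensional $GM$ estimate, so $f$ is defined everywhere. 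That the series is the Fourier series of its sum follows from partial-sum convergence in $L^1((0,2\pi)^2)$, obtained along the same lines, which legitimises passing to the limit inside the Fourier coefficient integrals.

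For the Hardy-Littlewood equivalence my plan is to establish a dyadic pointwise estimate of the shape
\begin{equation*}
|f(x,y)|\;\asymp\;\frac{1}{xy}\,a_{M(x),N(y)}\qquad\text{with }M(x)\asymp 1/x,\ N(y)\asymp 1/y,
\end{equation*}
modulo $GM^2$-driven correction terms, on each dyadic rectangle $I_{j,k}:=[2^{-j-1},2^{-j}]\times[2^{-k-1},2^{-k}]$. For $(x,y)\in I_{j,k}$ I would split the Abel-summed series according to whether $m\le 2^j$ or $m>2^j$ and $n\le 2^k$ or $n>2^k$. In the ``head'' block, telescoping together with $|\widetilde D_m\widetilde D_n|\lesssim mn$ produces a main term of order $2^{j+k}a_{2^j,2^k}$ plus the four terms on the right of \eqref{dtcond}; in each ``tail'' block one bounds the relevant Dirichlet kernels by $2^j$ or $1/|x|$ (and likewise in $n$) and applies \eqref{dtcond} directly. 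Integrating $|f|^p$ over $I_{j,k}$ contributes a factor of order $2^{-(j+k)}\cdot 2^{(j+k)p}a_{2^j,2^k}^p=2^{(j+k)(p-1)}a_{2^j,2^k}^p$, which summed in $j,k$ is equivalent, via the discrete $GM$-type property, to $\sum_{m,n}a_{mn}^p(mn)^{p-2}$, giving the direct inequality. For the reverse bound, the nonnegativity of $\{a_{mn}\}$ is crucial: inserting a localising Fej\'er or de la Vall\'ee Poussin kernel converts the upper estimate on $I_{j,k}$ into a matching lower pointwise estimate, ruling out cancellation between the four dyadic regions.

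The chief obstacle I anticipate is controlling the four asymmetric terms on the right-hand side of \eqref{dtcond} simultaneously. Each calls for a Hardy-type inequality of a different flavour---pointwise in $(k,l)$, one-dimensional averages in $m$ and in $n$, and a genuinely two-dimensional average---and one must verify that, after summation over dyadic blocks, the $l^p$-boundedness of all four majorants survives with a common constant. This is precisely where the boundedness of the Hardy operator on $l^p$ for $p\in(1,\infty)$ enters, and it also indicates why the statement is expected to degenerate at the endpoints $p=1,\infty$. A secondary technical point is that the ``off-diagonal'' dyadic blocks (those with $m\gg 2^j$ or $n\gg 2^k$) genuinely couple the two variables, so the $GM^2$ condition has to be iterated in a prescribed order to avoid losing powers of $m$ or $n$.
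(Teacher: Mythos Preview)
This theorem is not proved in the present paper; it is quoted as a known background result, with the proof residing in the cited works \cite{DT-0} and \cite{DT-1}. There is therefore no argument in this paper against which to compare your proposal.

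For what it is worth, your outline is broadly consonant with how the paper establishes its own analogous upper bound, Theorem~\ref{m}(b), for the $GM^c$ classes: Abel summation yields a pointwise upper bound of the shape \eqref{bo} with four terms, each then handled via the Hardy-type inequalities \eqref{Hard1}--\eqref{Hard2} in one or both variables. One caution: your two-sided pointwise claim $|f(x,y)|\asymp (xy)^{-1}a_{M(x),N(y)}$ is too optimistic as written---the upper bound genuinely requires all four majorants from \eqref{dtcond}, not just the single term, and the lower bound is recovered only in an integrated (norm) sense via the kernel argument you sketch, not pointwise. The paper does confirm, in the paragraph immediately following the theorem, your identification of where nonnegativity enters: it is needed for the direction $\sum a_{mn}^p(mn)^{p-2}\lesssim\|f\|_p^p$, whereas the reverse inequality was proved in \cite{DT-1} under \eqref{dtcond} alone.
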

It is worth mentioning that the $\gtrsim$ part was proved without assuming $a_{mn}\geq 0$, moreover, it was shown that if $\sum_{m=k}^{\infty}\sum_{n=l}^{\infty}|\Delta^{11}a_{mn}|\lesssim \beta_{kl}$, then $\sum_{m,n=1}^{\infty}\beta_{mn}^p(mn)^{p-2}\gtrsim \|f\|_p^p.$ However, in the proof of the counterpart the requirement of nonnegativity plays a crucial role. It was noted in \cite[Th. 4.1]{DT-11} that following the lines of this proof one can adapt it for a more general class of sequences for which the right-hand side of \eqref{dtcond} is replaces by $\sum_{m=\lceil k/\lambda\rceil}^{\infty}\sum_{n=\lceil l/\lambda\rceil}^{\infty}|a_{mn}|/mn,\;\lambda>1$. 

Further, it was shown \cite{YZZ-2} that some other $GM$ type nonnegative sequences happen to obey the two-sided Hardy-Littlewood relation. We present the result from \cite{YZZ-2} for weighted spaces.

\begin{oldthm}[Yu, Zhou, Zhou, 2012] Let $\{a_{mn}\}$ be a nonnegative sequence satisfying \eqref{cond1} and the following $GM$ type conditions
\begin{align*}
\sum_{m=k}^{2k}|\Delta a_{ml}|&\lesssim \sum_{m=\lfloor \lambda^{-1}k\rfloor}^{\lfloor \lambda k\rfloor}\frac{|a_{ml}|}{m},\qquad\sum_{n=l}^{2l}|\Delta a_{kn}|\lesssim \sum_{n=\lfloor \lambda^{-1}l\rfloor}^{\lfloor \lambda l\rfloor}\frac{|a_{kn}|}{n},\\
&\sum_{m=k}^{2k}\sum_{n=l}^{2l}|\Delta a_{mn}|\lesssim \sum_{m=\lfloor \lambda^{-1}k\rfloor}^{\lfloor \lambda k\rfloor}\sum_{n=\lfloor \lambda^{-1}l\rfloor}^{\lfloor \lambda l\rfloor}\frac{|a_{mn}|}{mn}
\end{align*}
for some $\lambda\geq 2$, and let $f(x,y):=\sum_{m,n=1}^{\infty}a_{mn}\sin mx\sin ny$. Then, for any $p\in[1,\infty)$, for any function $\phi\in \Phi$ with either $\phi^{-\frac{1}{p-1}}\in L$ if $p>1$, or $\phi^{-1}\in L_{\infty}$, if $p=1$, we have
\begin{align*}
\phi|f|^p\in L\Leftrightarrow \sum_{m,n=1}^{\infty}a_{mn}^p\phi(1/m,1/n)(mn)^{p-2}<\infty.
\end{align*}
\end{oldthm}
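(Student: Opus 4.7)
The plan is to control both directions by a dyadic comparison. Set $I_{kl}=[\pi 2^{-k-1},\pi 2^{-k}]\times [\pi 2^{-l-1},\pi 2^{-l}]$ for $k,l\ge 0$; using the regularity of $\phi\in\Phi$ one may replace $\phi(x,y)$ on $I_{kl}$ by the constant $\phi(2^{-k},2^{-l})\asymp \phi(1/2^k,1/2^l)$, so that proving the claimed equivalence reduces to showing, for each $k,l$,
\begin{equation*}
\int_{I_{kl}}|f(x,y)|^p\,dx\,dy\;\asymp\;a_{2^k,2^l}^p\,(2^k 2^l)^{p-2},
\end{equation*}
modulo errors forming a summable double series, and then invoking the mean-value form of the one- and two-dimensional $GM$ conditions to replace $a_{2^k,2^l}$ by the block sum $\sum_{m\asymp 2^k,\,n\asymp 2^l}a_{mn}^p\phi(1/m,1/n)(mn)^{p-2}$.

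For the upper bound, on each $I_{kl}$ I would split $f=S_{kl}+T_{kl}^{(10)}+T_{kl}^{(01)}+T_{kl}^{(11)}$, where $S_{kl}$ is the partial sum over $m\le 2^{k+1}$, $n\le 2^{l+1}$ and the three remaining pieces correspond to the two strips (one index small, the other large) and the corner (both large). On the main part $|\sin mx\sin ny|\lesssim mn\cdot 2^{-k-l}$, so
\begin{equation*}
|S_{kl}(x,y)|\lesssim 2^{-k-l}\sum_{m\le 2^{k+1}}\sum_{n\le 2^{l+1}}mn\,a_{mn},
\end{equation*}
which after a two-dimensional Abel transform and an application of the two-variable $GM$ condition is bounded by a local dyadic mean of $a_{mn}$ of the expected size. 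For the three tails, summation by parts in the unbounded variable(s) produces conjugate Dirichlet-type kernels obeying $\big|\sum_{m=M}^{N}\sin mx\big|\lesssim 1/x\lesssim 2^k$, which reduces each tail estimate to a one-variable $GM$ sum with the second variable frozen; Theorem \ref{sagh} applied in the frozen variable (together with the assumption on $\phi^{-1/(p-1)}$ used through H\"older's inequality) then yields the desired bound on each $I_{kl}$.

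For the lower bound I would restrict to a subrectangle $J_{kl}\subset I_{kl}$ on which both $\sin(2^kx)$ and $\sin(2^ly)$ are of order one with a fixed sign. The nonnegativity of $\{a_{mn}\}$ ensures that the diagonal term of $S_{kl}$ gives $S_{kl}(x,y)\gtrsim a_{2^k,2^l}$ on $J_{kl}$ once the one-variable $GM$ conditions are used to prevent the other terms in $S_{kl}$ from cancelling this main contribution; the three tails are again shown to be of strictly smaller order through Abel summation combined with the mean-value $GM$ conditions (the parameter $\lambda\ge 2$ is exactly what makes $a_{2^k,2^l}$ comparable to its dyadic block average). Integrating $\phi|f|^p$ over $J_{kl}$ and summing over $k,l$ reproduces, up to absolute constants, the coefficient series.

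The main obstacle will be the two mixed tails $T_{kl}^{(10)},T_{kl}^{(01)}$: after Abel summation in the "large" variable they become double sums in which one index runs over first-order differences $\Delta a_{mn}$ and the other over a bounded range, and one must verify that the resulting one-parameter family of sequences still obeys a one-dimensional $GM$-type condition in the remaining variable, uniformly in the frozen parameter. This is the step at which the three hypotheses of the theorem interact, the two one-dimensional $GM$ bounds providing regularity of the frozen slices and the two-dimensional one providing control of the derived sequence. The integrability hypothesis on $\phi$ is exactly what splices the one-dimensional estimate with the sum over the frozen variable via H\"older's inequality; without it the argument collapses, which is precisely why this assumption appears in the statement.
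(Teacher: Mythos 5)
This theorem is not proved in the paper at all: it appears in the introduction as a cited prior result (it is set in the \texttt{oldthm} environment used for Theorems A--F, all stated without proof), attributed to Yu, Zhou, and Zhou \cite{YZZ-2}. The paper's own contribution is Theorem~\ref{m} for the classes $GM^c_1$ and $GM^c_2$, whose proof is built on Lemmas~\ref{le}, \ref{oddeven}, and \ref{sag2}; the Yu--Zhou--Zhou result is merely background motivating the fact that one can weaken Hardy-sense monotonicity to mean-value $GM$ conditions for nonnegative sequences. There is therefore no ``paper's own proof'' to compare against, and your attempt is in fact supplying a proof for a statement the paper takes as external input.

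As for the sketch itself, the overall strategy (dyadic blocks, Abel transform in the unbounded variables, nonnegativity plus the mean-value $GM$ condition to make the main term dominate on a subrectangle) is the standard route for such theorems and is in the right spirit. But the key reduction you state, namely that $\int_{I_{kl}}|f|^p\,dx\,dy \asymp a_{2^k,2^l}^p(2^{k+l})^{p-2}$ should hold \emph{blockwise}, is too strong under a mean-value $GM$ hypothesis: the condition with $\lambda\ge 2$ only controls $a_{2^k,2^l}$ by a dyadic average $\sum_{m\asymp 2^k,\,n\asymp 2^l}a_{mn}/(mn)$, and an individual block where $a_{2^k,2^l}$ happens to be anomalously large or small compared to its neighbors would break the pointwise two-sided bound. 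What one actually proves (and what is needed) is the equivalence after summing over $k,l$, which is weaker. Your handling of the mixed tails $T^{(10)}_{kl},T^{(01)}_{kl}$ also names the right difficulty but does not resolve it: you would need to verify explicitly that the one-dimensional slices $\{\Delta^{10}a_{mn}\}_n$ (for $m$ frozen) inherit a usable regularity from the two one-dimensional conditions plus the mixed one, and this is precisely where the hypotheses interact nontrivially. Without carrying this out, the argument is a plan rather than a proof.
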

In the above result $\Phi$ stands for some class of power-like positive functions, which we are not going to specify here. A similar result with a more general $GM$ type positive sequences and some other (not comparable) class of power-like functions was obtained in \cite{DS}.

The main purpose of this work is to show that for some kinds of double $GM$ sequences we can prove the Hardy-Littlewood theorem without restricting ourselves only to positive sequences. We present two $GM$ type classes for which the two-sided Hardy-Littlewood inequality holds true. 

We write that $\{a_{mn}\}\in GM^c_1$ if it satisfies \eqref{cond1} and
\begin{align}\label{cond2}
\sum_{m=k}^{2k}\sum_{n=l}^{\infty}|\Delta^{11}a_{mn}|+\sum_{m=k}^{\infty}\sum_{n=l}^{2l}|\Delta^{11}a_{mn}|\leq C|a_{kl}|,
\end{align}
and $\{a_{mn}\}\in GM^c_2$, if it satisfies \eqref{cond1} and
\begin{align}\label{cond22}
\sum_{m=k}^{2k}\sum_{n=l}^{\infty}|\Delta^{11}a_{mn}|+\sum_{m=k}^{\infty}\sum_{n=l}^{2l}|\Delta^{11}a_{mn}|\leq C|a_{2k,l}|,
\end{align}
for all $k,l\in\mathbb{N}$ and some constant $C$ depending only on the sequence $\{a_{mn}\}$. We remark that the letter $c$ in $GM^c$ comes from the word $``$corner$"$, since a set of the kind $[k,2k]\times[l,\infty)\cup [k,\infty)\times[l,2l]$ generates a corner on the plane. Note that $GM^c_1$ sequences obey the one-dimensional $GM$ conditions \eqref{gm} in each variable (see \eqref{cond1'} in the proof of Lemma \ref{le}), while $GM^c_2$ in one variable satisfy \eqref{gm}, and in another one, the $``$backward$"$ $GM$ condition.

Note that for $[-\pi,\pi]^2$ the $L_{w(p,q)}^q$-norms take the form
\begin{equation*}
\|f\|_{L_{w(p,q)}^q} :=
\begin{cases}
\Big(\int\limits_{-\pi}^{\pi}\int\limits_{-\pi}^{\pi} |ts|^{\frac{q}{p}-1}|f(t,s)|^q dt\;ds\Big)^{\frac{1}{q}},\quad \text{if} \; 0 < p,q < \infty, \\
\operatorname*{ess\,sup}\limits_{(t,s)\in [-\pi,\pi]^2}\ |(ts)^{\frac{1}{p}} f(t,s)|,\quad \text{if} \;  0 < p \leq \infty,\; q = \infty.
\end{cases}
\end{equation*}

From now on, for convenience, we adopt the following notation: using that $(\sin x)^{(1)}=(\sin x)'=\cos x$ and $(\sin x)^{(0)}=\sin x$, we will write a two-dimensional trigonometric series as
\begin{align*}
\sum_{i,j=0}^1\sum_{m,n=0}^{\infty}a_{mn}^{ij}\sin^{(i)} mx\sin^{(j)} ny
\end{align*}
and we will say that $\{a_{mn}^{ij}\}_{m,n=1}^{\infty},\;i,j=0,1,$ is the sequence of its coefficients.

The main result of the paper is the following.

\begin{theorem}\label{m}
Let $p\in (1,\infty),\;q\in [1,\infty],$ and let each of the sequences $\{a_{mn}^{ij}\}_{m,n=1}^{\infty},\;i,j=0,1,$ belong either to $GM^c_1$ or to $GM^c_2$.

a) If $\{a_{mn}^{ij}\}_{m,n=1}^{\infty},\;i,j=0,1,$ is the sequence of Fourier coefficients of $f\in L(-\pi,\pi)$, then
\begin{align*}
\|f\|_{L_{w(p,q)}^q}^q \gtrsim \sum_{i,j=0}^1\sum_{m,n=1}^{\infty}|a_{mn}^{ij}|^q(mn)^{\frac{q}{p'}-1}.
\end{align*}

b) If $\sum_{i,j=0}^1\sum_{m,n=1}^{\infty}|a_{mn}^{ij}|^q(mn)^{\frac{q}{p'}-1}<\infty$, then the corresponding trigonometric series converges everywhere on $(0,2\pi)^2$ and is the Fourier series of its sum, moreover,
\begin{align*}
\|f\|_{L_{w(p,q)}^q}^q \lesssim\sum_{i,j=0}^1\sum_{m,n=1}^{\infty}|a_{mn}^{ij}|^q(mn)^{\frac{q}{p'}-1}.
\end{align*}

\end{theorem}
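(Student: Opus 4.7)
The plan is to treat parts (a) and (b) separately; by linearity and the symmetry between the four components $(i,j)\in\{0,1\}^2$ it suffices to establish each estimate for the sine--sine part $f(x,y)=\sum_{m,n\ge 1}a_{mn}\sin mx\sin ny$. Throughout, the one-dimensional weighted Hardy--Littlewood theorem for $GM$ sequences (Sagher; Dyachenko--Mukanov--Tikhonov) will serve as a black box, to be applied twice, once in each variable.

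\textbf{Part (b).} Starting from the finiteness of the weighted coefficient sum, I would first deduce $a_{mn}\to 0$ as $m+n\to\infty$, which legitimizes the 2D Abel summation. Writing $a_{mn}=\sum_{k\ge m}\sum_{l\ge n}\Delta^{11}a_{kl}$ and swapping the order of summation yields
$$f(x,y)=\sum_{k,l=1}^{\infty}\Delta^{11}a_{kl}\,\widetilde D_k(x)\,\widetilde D_l(y),$$
where $\widetilde D_k$ is the conjugate Dirichlet kernel, satisfying $|\widetilde D_k(x)|\lesssim \min(k,1/|x|)$. I would partition $(0,\pi)^2$ into dyadic rectangles $I_{KL}$ (with $x\asymp 2^{-K}$, $y\asymp 2^{-L}$) and split the double sum into the four regions determined by $k\le 2^K$ versus $k>2^K$ and $l\le 2^L$ versus $l>2^L$. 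The kernel bound then reduces the pointwise estimate to dyadically blocked sums of $|\Delta^{11}a_{kl}|$ along L-shaped corners, which \eqref{cond2} and \eqref{cond22} majorize by $|a_{kl}|$ or $|a_{2k,l}|$. Substituting into the weighted integral $\iint |xy|^{q/p-1}|f|^q\,dx\,dy$, summing over $K,L$, and applying a discrete weighted 2D Hardy inequality (two iterations of the 1D result) produces the claimed upper bound.

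\textbf{Part (a).} I would prove a localized reverse comparison: for each $(m,n)$,
$$|a_{mn}|\lesssim mn\iint_{R_{mn}}|f(x,y)|\,dx\,dy+(\text{tail}),$$
with $R_{mn}\asymp (1/m)\times(1/n)$ and a tail produced by oscillatory cancellation against nearby coefficients. The idea is to average $a_{mn}$ with a suitable linear combination of $a_{m',n'}$ for $m'\asymp m$, $n'\asymp n$, so that the oscillatory part of $\sin m'x\sin n'y$ cancels outside $R_{mn}$; the $GM^c$ condition guarantees that such a combination is comparable to $a_{mn}$ itself. Raising to the $q$th power, inserting the weight $|xy|^{q/p-1}$ via H\"older on $R_{mn}$, and summing in $(m,n)$ once more reduces the task to a discrete double weighted Hardy inequality, handled as above.

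\textbf{Main obstacle.} The central difficulty is that $GM^c_1$ and $GM^c_2$ only control $\sum|\Delta^{11}a_{kl}|$ over L-shaped corner sets, not over full rectangles, so the rectangular tail-sum bounds at the heart of the $GM^2$ proof of Dyachenko--Tikhonov are not directly available. One has to reconstruct rectangular control by iterating the corner conditions dyadically in one variable, using that the row and column sections of a $GM^c_1$ or $GM^c_2$ sequence satisfy a 1D $GM$ (in the $GM^c_1$ case) or a backward-$GM$ (in the $GM^c_2$ case) condition---the content of an auxiliary lemma on one-dimensional sections of $GM^c$ sequences. The $GM^c_2$ case is the more delicate of the two, since the majorant $|a_{2k,l}|$ has to be telescoped back toward $|a_{kl}|$ by a shifting argument. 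Throughout, all estimates must be carried out with absolute values from the start, which precludes the monotone-convergence and rearrangement tricks used by M\'oricz and Dyachenko--Tikhonov for nonnegative coefficients.
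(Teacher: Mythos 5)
For part (b) your plan (2D Abel summation, dyadic kernel bounds, weighted Hardy inequalities) matches the paper's argument closely, and the first check that $\sum|\Delta^{11}a_{kl}|<\infty$ to guarantee pointwise convergence is also there.

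For part (a), however, there is a genuine gap: the ``localized reverse comparison'' $|a_{mn}|\lesssim mn\iint_{R_{mn}}|f|+(\text{tail})$ is asserted but not established, and this is precisely where essentially all the difficulty resides once coefficients are allowed to change sign. You say ``the $GM^c$ condition guarantees that such a combination is comparable to $a_{mn}$ itself,'' but that statement is itself the heart of the matter, not a consequence one can quote. The paper does \emph{not} attempt a pointwise-local argument; instead it relies on two lemmas that you do not anticipate. First, an interpolation lemma (the two-dimensional analogue of the Sagher/DMT lemma) showing $\sum_{m,n}\bigl(\sup_{k\ge m,\,l\ge n}\frac{1}{kl}|\sum_{s\le k}\sum_{t\le l}a_{st}|\bigr)^q(mn)^{q/p'-1}\lesssim\|f\|^q_{L^q_{w(p,q)}}$, proved by a double application of real interpolation between weak-type endpoints obtained with the smooth kernels $I_m$; no ``H\"older on $R_{mn}$'' step is involved. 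Second, and more importantly, a combinatorial sign-counting lemma: from the $GM^c$ condition one shows each row and column of a dyadic block has a bounded number of sign changes, and then a checkerboard/pigeonhole argument produces a sub-rectangle of proportional size on which the coefficients all have the same sign and magnitude $\asymp A_{mn}$, so that the rectangular partial sum is $\gtrsim 2^{m+n}A_{mn}$. That lemma supplies exactly the ``suitable linear combination'' you gesture at, but it is a concrete geometric construction, not a consequence of iterating one-dimensional $GM$ conditions. There is also a final bookkeeping step you omit: one must partition dyadic pairs $(m,n)$ into ``good'' ones (where $A_{m\mp1,n-1}\le TA_{mn}$) and ``bad'' ones, applying the sign-lemma only to good pairs and absorbing the bad ones by a geometric-decay argument. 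In short, the upper-bound half of your plan is sound and parallels the paper, but the lower-bound half replaces the two crucial lemmas with an unproved pointwise inequality and therefore does not constitute a proof.
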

Sharpness of Theorem \ref{m} for $GM^c_2$ sequences is provided by a counterexample in Theorem \ref{re}, which shows that if we restrict the sum on the left-hand side of \eqref{cond22} to the rectangle (that is, to the intersection and not the union of the two corresponding strips), which is one of the most natural generalizations of the left-hand side of the $GM$ condition \eqref{gm}, then the $\gtrsim$ part fails for $p>2$ and $q\geq p$. 

\section{Proof of the Hardy-Littlewood theorem for $GM^c$ sequences}

For a sequence $\{a_{mn}\}_{m,n=1}^{\infty}$, we define
\begin{align*}
A_{mn}:=\max_{(k,l)\in Q_{m,n}}|a_{kl}|:=\max_{(k,l)\in [2^m,2^{m+1}]\times [2^{n},2^{n+1}]}|a_{kl}|.
\end{align*}

\begin{lemma}\label{le} a) For any sequence $\{a_{kl}\}_{k,l=1}^{\infty}\in GM^c_1$, there exist $c,v>0$ such that for any $(m,n)$ with $A_{m-1,n-1}\leq TA_{m,n}$ there exist a rectangle $Q'_{m-1,n-1}\subset Q_{m-1,n-1}$ of size $2^{m-v}\times 2^{n-v}$ satisfying 
\begin{align*}
\bigg|\sum_{k,l\in\; Q'_{m-1,n-1}}a_{kl}\bigg|> c2^{m+n}A_{mn},
\end{align*}
where $c$ and $v$ depend only on $C$ and $T$.

b) For any sequence $\{a_{kl}\}_{k,l=1}^{\infty}\in GM^c_2,$ there exist $c,v>0$ such that for any $(m,n)$ with $A_{m+1,n-1}\leq TA_{m,n}$ there exist a rectangle $Q'_{m+1,n-1}\subset Q_{m+1,n-1}$ of size $2^{m-v}\times 2^{n-v}$ satisfying 
\begin{align*}
\bigg|\sum_{k,l\in\; Q'_{m+1,n-1}}a_{kl}\bigg|> c2^{m+n}A_{mn},
\end{align*}
where $c$ and $v$ depend only on $C$ and $T$.
\end{lemma}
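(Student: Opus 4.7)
\emph{Proof plan.} I handle part (a) in detail; part (b) is analogous after replacing the halving dyadic step in $k$ by a doubling one, as explained at the end of the second paragraph. Pick $(k_1,l_1)\in Q_{m,n}$ with $|a_{k_1,l_1}|=A_{mn}$. The $GM^c_1$ condition implies a one-variable $GM$ bound in each coordinate, $\sum_{k=k_0}^{2k_0}|\Delta^{10}a_{k,l_0}|\leq C|a_{k_0,l_0}|$ and its $l$-analog; applying these successively at $(k_1/2,l_1)$ and $(k_1/2,l_1/2)$ yields the anchor
\[
(k^*,l^*):=(k_1/2,l_1/2)\in Q_{m-1,n-1},\qquad \frac{A_{mn}}{(C+1)^{2}}\leq |a_{k^*,l^*}|\leq A_{m-1,n-1}\leq TA_{mn}.
\]
Plugging the upper bound into the right-hand side of $GM^c_1$ at $(k^*,l^*)$ gives the global corner bound
\[
\sum_{k=k^*}^{k_1}U_k+\sum_{l=l^*}^{l_1}V_l\leq CT A_{mn},\qquad U_k:=\sum_{l\geq l^*}|\Delta^{11}a_{kl}|,\quad V_l:=\sum_{k\geq k^*}|\Delta^{11}a_{kl}|.
\]

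\emph{Construction of $R'$.} I will partition $[k^*,k_1]$ into $2^{v-1}$ consecutive blocks of length $2^{m-v}$ and $[l^*,l_1]$ into $2^{v-1}$ blocks of length $2^{n-v}$; averaging the bound above, for $v$ large enough depending on $C,T$ I can find blocks $I,J$ with $\sum_{k\in I}U_k,\ \sum_{l\in J}V_l\leq A_{mn}/\bigl(8(C+1)^{2}\bigr)$. A four-case analysis according to which quadrant of $Q_{m-1,n-1}$ contains $(k^*,l^*)$ (with a reflected construction when the anchor sits close to an edge) then lets me orient the blocks so that $R':=I\times J\subset Q_{m-1,n-1}$ and some vertex $(\hat k,\hat l)$ of $R'$ is joined to $(k^*,l^*)$ by a coordinate path whose variation is bounded exactly by those two pigeonhole sums. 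Telescoping along that path gives $|a_{\hat k,\hat l}-a_{k^*,l^*}|\leq A_{mn}/\bigl(4(C+1)^{2}\bigr)$, so $|a_{\hat k,\hat l}|\geq A_{mn}/\bigl(2(C+1)^{2}\bigr)$ with the same sign as $a_{k^*,l^*}$. The same estimate applied between $(\hat k,\hat l)$ and an arbitrary $(k,l)\in R'$ forces $|a_{kl}|\geq A_{mn}/\bigl(4(C+1)^{2}\bigr)$ and $\operatorname{sgn}(a_{kl})=\operatorname{sgn}(a_{\hat k,\hat l})$ throughout $R'$, whence $\bigl|\sum_{(k,l)\in R'}a_{kl}\bigr|\geq 2^{m+n-2v}A_{mn}/\bigl(4(C+1)^{2}\bigr)$, i.e.\ the claim with $c=2^{-2v}/\bigl(4(C+1)^{2}\bigr)$. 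For part (b), the $GM^c_2$ analog of the one-variable $GM$ reads $|a_{k_0,l_0}|\leq (C+1)|a_{2k_0,l_0}|$; combining this with the corresponding estimate in $l$ produces the anchor $(2k_1,l_1/2)\in Q_{m+1,n-1}$ with $|a_{2k_1,l_1/2}|\geq A_{mn}/(2C+1)$, and the construction above is then repeated on $[2k_1,4k_1]\times[l_1/2,l_1]$.

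\emph{Main obstacle.} The delicate step is the adjacency/orientation argument of the previous paragraph: the pigeonhole only places good blocks somewhere in the respective dyadic strip, so one has to do the four-case geometric analysis (and its reflected variant at the edges) to guarantee that the anchor $(k^*,l^*)$ (respectively $(2k_1,l_1/2)$ in part (b)) is joined to a vertex of $R'$ by a coordinate path staying inside a low-variation corridor. Once this is arranged, the rest of the proof is a routine telescoping/triangle-inequality computation.
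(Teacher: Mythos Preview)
Your proof has a genuine gap at precisely the step you flag as the ``main obstacle.'' The pigeonhole produces blocks $I\subset[k^*,k_1]$ and $J\subset[l^*,l_1]$ with $\sum_{k\in I}U_k$ and $\sum_{l\in J}V_l$ small, but it gives no control over \emph{where} those blocks sit. Any coordinate path from the anchor $(k^*,l^*)$ to a vertex $(\hat k,\hat l)$ of $R'=I\times J$ must traverse every $k$-index between $k^*$ and $\hat k$ (and likewise in $l$); telescoping therefore only yields $|a_{\hat k,\hat l}-a_{k^*,l^*}|\le\sum_{k=k^*}^{\hat k}U_k+\sum_{l=l^*}^{\hat l}V_l$, which in general is of order $CTA_{mn}$ and can swamp the lower bound $|a_{k^*,l^*}|\ge A_{mn}/(C+1)^2$. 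No quadrant analysis or reflection makes the intervening indices disappear, so the asserted bound $|a_{\hat k,\hat l}-a_{k^*,l^*}|\le A_{mn}/\bigl(4(C+1)^2\bigr)$ is unjustified. A related issue: since $k_1\ge 2^m$, the interval $[k^*,k_1]$ protrudes beyond $[2^{m-1},2^m]$, so your pigeonholed block $I$ need not lie inside $Q_{m-1,n-1}$ at all.

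Both problems evaporate once you observe that the one-variable $GM$ bounds you already wrote down give the \emph{uniform} lower bound $|a_{s,t}|\ge(C+1)^{-2}A_{mn}$ for \emph{every} $(s,t)\in Q_{m-1,n-1}$, not just at a single anchor: from $|a_{s,t}|\le(C+1)|a_{k,l}|$ for $(s,t)\in[k,2k]\times[l,2l]$ one gets $A_{mn}\le(C+1)^2|a_{s',t'}|$ for any $(s',t')\in Q_{m-1,n-1}$. With this in hand there is no need to connect $R'$ to an anchor. Apply $GM^c_1$ at the corner $(2^{m-1},2^{n-1})$ (so the whole pigeonhole takes place inside $Q_{m-1,n-1}$), find $I\subset[2^{m-1},2^m]$ and $J\subset[2^{n-1},2^n]$ as before, and then your within-$R'$ variation estimate---which \emph{is} correct---combined with the uniform lower bound forces a single sign throughout $R'$. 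This repaired argument is actually tidier than the paper's, which after the same uniform lower bound instead counts sign changes along rows and columns, runs a checkerboard pigeonhole on a grid of $2^{2u}$ sub-rectangles to locate one with few sign-change ``marked segments,'' and then argues geometrically that most of that sub-rectangle carries one sign. Your $U_k,V_l$ pigeonhole shortcuts that combinatorics; note, however, that it uses the full corner condition, whereas the paper's route needs only its one-dimensional consequence.
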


\begin{proof}
Note that \eqref{cond1} and \eqref{cond2} imply that
\begin{align}\label{cond1'}
\sum_{m=k}^{2k}|\Delta^{10}a_{mt}|+\sum_{n=l}^{2l}|\Delta^{01}a_{sn}|\leq C|a_{k,l}|
\end{align}
for any $k,l\in\mathbb{N}$ and $(s,t)\in [k,2k]\times [l,2l]$. Similarly, \eqref{cond1} along with \eqref{cond22} imply \eqref{cond1'} with $a_{2k,l}$ instead of $a_{k,l}$ on the right-hand side. In particular, \eqref{cond1'} yields that
\begin{align*}
|a_{s,t}|-|a_{k,l}|=|a_{s,t}|-|a_{k,t}|+|a_{k,t}|-|a_{2k,l}|\leq C|a_{k,l}|,
\end{align*}
so
\begin{align*}
|a_{s,t}|\leq (C+1)|a_{k,l}|\leq (C+1)^2|a_{s't'}|
\end{align*}
for any $(s',t')\in [0.5k,k]\times [0.5l,l]$. Considering $k=2^m,\;l=2^n,$ we get for any $(s,t)\in Q_{m-1,n-1}$
\begin{align}\label{ineq}
|a_{st}|\geq (C+1)^{-2}A_{mn}=:\a A_{mn}.
\end{align}

For conditions \eqref{cond1} and \eqref{cond22}, the same arguments give
\begin{align*}
|a_{s,t}|-|a_{2k,l}|=|a_{s,t}|-|a_{2k,t}|+|a_{2k,t}|-|a_{2k,l}|\leq C|a_{2k,l}|,
\end{align*}
and
\begin{align*}
|a_{s,t}|\leq (C+1)|a_{2k,l}|\leq (C+1)^2|a_{s't'}|
\end{align*}
for any $(s',t')\in [2k,4k]\times [0.5l,l]$. Once more, considering $k=2^m,\;l=2^n,$ we get \eqref{ineq} for $(s,t)\in Q_{m+1,n-1}$ instead of $Q_{m-1,n-1}$.

Thus, any sequence $\{a_{kl}\}\in GM^c_1$ satisfies $|a_{kl}|\leq (C+1)|a_{k'l'}|$ for $(k',l')\in [0.5k,k]\times [0.5l,l]$ as well as any $\{a_{kl}\}\in GM^c_2$ does for $(k',l')\in [k,2k]\times [0.5l,l]$.

In Lemma \ref{le}a), due to condition \eqref{cond1'} and inequality \eqref{ineq}, for any $(k,l)\in Q_{m-1,n-1}$, each one of the sequences $a_{2^{m-1},l},a_{2^{m-1}+1,l},...,a_{2^m,l}$ and $a_{k,2^{n-1}},a_{k,2^{n-1}+1},...,a_{k,2^n}$ can have at most 
\begin{align}\label{beta}
\frac{C\max\limits_{(k,l)\in Q_{m-1,n-1}}|a_{kl}|}{2\a A_{mn}}= \frac{C A_{m-1,n-1}}{2\a A_{mn}}\leq \frac{CT}{2\a}=:b
\end{align}
changes of sign.

The same holds for $Q_{m+1,n-1}$ in place of $Q_{m-1,n-1}$ in Lemma \ref{le}b). 

Focus now on Lemma \ref{le}a). Consider the rectangle $R:=Q_{m-1,n-1}=[2^{m-1},2^{m}]\times [2^{n-1},2^{n}]$ on the plane and draw all the segments $[(k,l),(k+1,l)]$ such that $a_{k,l-1}$ and $a_{k,l}$ have different signs and all the segments $[(k,l),(k,l+1)]$ such that $a_{k-1,l}$ and $a_{k,l}$ have different signs (call them {\it marked} segments). Then our rectangle $R$ is divided by the marked segments into several connected parts corresponding to the terms of $\{a_{kl}\}$ of the same sign. The interior part of the union of their boundaries has at most $b2^{n-1}$ vertical marked segments and at most $b2^{m-1}$ horizontal ones. Take a positive integer $u$ such that
\begin{align}\label{u}
2^{u}> 8b\tau,
\end{align}
where $\tau:=4\sqrt{T(C+1)^2+1}$. Divide $R$ into $2^{2u}$ equal rectangles of size $2^{m-1-u}\times 2^{n-1+u}$ and consider a half of them in a checkerboard pattern. Suppose that there is no rectangle among them containing at most $2^{n-1-u}/\tau$ vertical marked segments and at most $2^{m-1-u}/\tau$ horizontal ones. Then we must have
\begin{align*}
2^{2u-1}\leq \frac{b2^{m-1}\tau}{2^{m-1-u}}+\frac{b2^{n-1}\tau}{2^{n-1-u}}=2^{u+2}b\tau\leq 4b\tau 2^{u},
\end{align*} 
which contradicts \eqref{u}. So, there is a rectangle $r=[\a_1,\a_2]\times [\beta_1,\beta_2]$ of size $2^{m-1-u}\times 2^{n-1-u}$ with at most $2^{n-1-u}/\tau$ vertical marked segments and at most $2^{m-1-u}/\tau$ horizontal ones inside it. Consider the parts corresponding to the terms of $\{a_{kl}\}$ of the same sign inside $r$. Call the parts whose boundaries intersect the boundary of $r$ by $A$-parts, the other ones, by $B$-parts. Note that there is no marked segment of an $A$-part inside the rectangle $r':=[\frac{3\a_1+\a_2}{4},\frac{\a_1+3\a_2}{4}]\times [\frac{3\beta_1+\beta_2}{4},\frac{\beta_1+3\beta_2}{4}]$. Indeed, otherwise there would exist a broken line of marked segments with either at least $0.25(\a_2-\a_1)=2^{m-3-u}$ horizontal segments or at least $0.25(\beta_2-\beta_1)=2^{n-3-u}$ vertical ones. But this is impossible, since $\tau>4$. The area of all $B$-parts does not exceed $2^{m+n-2-2u}/\tau^2$. Thus, there are at least $2^{m+n-4-2u}(1-4\tau^{-2})$ terms of the same sign in $r'$, so the absolute value of the sum of the terms $\{a_{kl}\}$ in $r'$ is at least
\begin{align*}
2^{n+m-2u-4}\Big(1-\frac{4}{\tau^2}-\frac{4}{\tau^2}T(C+1)^2\Big)\a A_{mn}> 2^{n+m-2u-5}\a A_{mn},
\end{align*}
which concludes the proof of Lemma \ref{le}a) with $c:=2^{-2u-5}\a$ and $v:=u+1$.

A similar argument is valid for $Q_{m+1,n-1}$ in Lemma \ref{le}b), which completes the proof.
\end{proof}

\begin{remark} In the proof of Lemma \ref{le}, for $GM^c_1$ class we only used its one-dimensional $GM$ properties \eqref{cond1'}, and for $GM^c_2$, the corresponding nonsymmetric relations (namely, \eqref{cond1'} with $a_{2k,l}$ in place of $a_{k,l}$).
\end{remark}

\begin{remark}\label{rem} The claim of Lemma \ref{le}a) is no longer true if we substitute the $GM^c_1$ condition \eqref{cond2} for
\begin{align}\label{cond3}
\sum_{m=k}^{2k}\sum_{n=l}^{2l}|\Delta^{11}a_{mn}|\leq C|a_{kl}|.
\end{align}
\end{remark}

\begin{proof}
Indeed, consider the sequence $$a_{mn}:=\frac{(-1)^m}{m}f_m(n),$$
where $f_m(n)$ we define as follows: 
\begin{equation*}
f_m(n)=
\begin{cases}
2^{-m+1},\qquad \log_2 n<\frac{m(m+1)}{2},\\
 2^{-m-t},\qquad\frac{(m+t)^2+m-t}{2}\leq \log_2 n<\frac{(m+t+1)^2+m-t-1}{2},\quad t\in\mathbb{Z}_+.
\end{cases}
\end{equation*}

For such a sequence, condition \eqref{cond1} obviously holds. Consider a rectangle $S_{mn}$ of the form $[m,2m)\times [n,2n)$. The only nonzero $\Delta^{11}a_{kl}$ in this rectangle are  $\Delta^{11}a_{m'-1,n'}$ and $\Delta^{11}a_{m'n'}$, where $n'\in[n,2n):\;\lfloor\log_2 (n')\rfloor=\lfloor \log_2 (n'-1)\rfloor+1$, i.e. $n'$ is a power of two, and 
$$m':=\min\Big\{m\in\mathbb{N}:\;m=\log_2 n'-\frac{k(k+1)}{2},\;k\in\mathbb{Z}_+\Big\}.$$
Note that $|a_{kl}|\leq |a_{mn}|$ for $k\geq m,\;l\geq n$, so $|\Delta^{11}a_{m'n'}|\leq |a_{m'n'}|+|a_{m'+1,n'}|\leq 2|a_{mn}|$, which yields condition \eqref{cond3} with $C=2$.

Assume that the assertion of Lemma \ref{le} holds. Then there must exist a constant $c$ such that for at least $cmn$ squares $[k,k+2)\times [l,l+2)$ in any $S_{mn}$ there holds 
\begin{align}\label{con}
|a_{kl}+a_{k,l+1}+a_{k+1,l}+a_{k+1,l+1}|\geq c|a_{kl}|.
\end{align}
Consider a rectangle $S_{mn}$ with
\begin{align*}
\frac{t(t+1)}{2}+2m\leq \log_2 n\leq \frac{(t+1)(t+2)}{2}-2,
\end{align*}
where $t>4m$ is a positive integer. For any $a_{kl}$ in $S_{mn}$, we have
\begin{align*}
a_{kl}=2^{-t-1}\frac{(-1)^k}{k},
\end{align*}
whence for any $2\times 2$ square $[k,k+2)\times [l,l+2)\subset S_{mn}$
\begin{align*}
|a_{kl}+a_{k,l+1}+a_{k+1,l}+a_{k+1,l+1}|=2^{-t-1}\cdot 2\Big(\frac{1}{k}-\frac{1}{k+1}\Big)=\frac{2}{k+1}|a_{kl}|<\frac{2}{m}|a_{kl}|=o(|a_{kl}|),
\end{align*}
as $m\to \infty$, which leads to a contradiction.
\end{proof}

\begin{lemma}\label{oddeven} For a function $f\in L(-\pi,\pi)$, given the representation
\begin{align*}
f(x,y)=\sum_{i,j=0}^1 f^{ij}(x,y),\quad f^{ij}(-x,y)=(-1)^{i}f^{ij}(x,y),\;f^{ij}(x,-y)=(-1)^{j}f^{ij}(x,y),
\end{align*}
for any $p\in (1,\infty),\;q\in[1,\infty],$ we have
\begin{align*}
\|f\|_{L_{w(p,q)}^q}\asymp \sum_{i,j=0}^{1}\|f^{ij}\|_{L_{w(p,q)}^q}.
\end{align*}
\end{lemma}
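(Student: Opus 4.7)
The plan is to exploit the fact that the decomposition $f=\sum_{i,j=0}^{1}f^{ij}$ into parts with prescribed parities in each variable is uniquely determined and, more importantly, that the weight $|ts|^{q/p-1}$ is symmetric under the reflections $t\mapsto -t$, $s\mapsto -s$. This symmetry will collapse the problem to a couple of applications of the triangle inequality.

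First I would write down the explicit formula for the parity components, namely
\begin{align*}
f^{ij}(x,y)=\frac{1}{4}\sum_{\varepsilon_1,\varepsilon_2\in\{-1,+1\}}\varepsilon_1^{\,i}\varepsilon_2^{\,j}f(\varepsilon_1 x,\varepsilon_2 y),\qquad i,j\in\{0,1\},
\end{align*}
and check that each summand in the right-hand side indeed has the required parity, so this must coincide with the given decomposition (by uniqueness of the even/odd splitting in each variable). The upper bound $\|f\|_{L_{w(p,q)}^q}\leq\sum_{i,j}\|f^{ij}\|_{L_{w(p,q)}^q}$ is then immediate from the triangle inequality (when $q=\infty$, from the subadditivity of the essential supremum).

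For the reverse direction I would argue as follows. Since $|ts|^{q/p-1}$ is even in $t$ and in $s$, the change of variables $t\mapsto\varepsilon_1 t$, $s\mapsto \varepsilon_2 s$ preserves the measure $|ts|^{q/p-1}\,dt\,ds$ on $[-\pi,\pi]^2$, so
\begin{align*}
\bigl\|f(\varepsilon_1\cdot,\varepsilon_2\cdot)\bigr\|_{L_{w(p,q)}^q}=\|f\|_{L_{w(p,q)}^q}
\end{align*}
for every choice of signs $\varepsilon_1,\varepsilon_2\in\{-1,+1\}$ (the same holds for the essential supremum in the case $q=\infty$). Plugging the explicit formula above into the $L_{w(p,q)}^q$-norm and applying the triangle inequality yields
\begin{align*}
\|f^{ij}\|_{L_{w(p,q)}^q}\leq\frac{1}{4}\sum_{\varepsilon_1,\varepsilon_2}\|f(\varepsilon_1\cdot,\varepsilon_2\cdot)\|_{L_{w(p,q)}^q}=\|f\|_{L_{w(p,q)}^q},
\end{align*}
and summation over $i,j\in\{0,1\}$ gives the desired lower bound with constant $4$.

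There is essentially no serious obstacle here; the only point to be a bit careful about is the $q=\infty$ case, where one should replace the triangle inequality by $\operatorname{ess\,sup}$-subadditivity and verify that reflections preserve $\operatorname{ess\,sup}_{(t,s)\in[-\pi,\pi]^2}|(ts)^{1/p}f(t,s)|$, which again follows from the evenness of $|ts|^{1/p}$. Thus the lemma reduces to the symmetry of the weight together with the uniqueness of the parity decomposition.
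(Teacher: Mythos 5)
Your proof is correct, and it takes a slightly different (and in fact cleaner) route than the paper. The paper only uses the explicit reflection formula
\begin{align*}
4f^{ij}(x,y)=f(x,y)+(-1)^if(-x,y)+(-1)^jf(x,-y)+(-1)^{i+j}f(-x,-y)
\end{align*}
in the case $q=\infty$; for $q<\infty$ it instead runs a two-step argument based on the pointwise inequality $|g_1|^q+|g_2|^q\lesssim|g_1+g_2|^q+|g_1-g_2|^q$, first combining $f^{i0}$ with $f^{i1}$ in the $y$-variable and then combining the results in the $x$-variable, each time invoking the evenness of the weight to identify $\|g_1-g_2\|$ with $\|g_1+g_2\|$. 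You apply the explicit formula uniformly for all $q\in[1,\infty]$ and reduce the whole statement to the observation that the reflections $(t,s)\mapsto(\varepsilon_1 t,\varepsilon_2 s)$ preserve the weighted norm, followed by one application of Minkowski's inequality. Both proofs ultimately rest on the same two ingredients (evenness of the weight and the triangle inequality in $L^q$), but yours avoids the case split, gives the explicit constant $4$, and is a bit more transparent. One small remark: you do not actually need the uniqueness of the parity decomposition as a separate step — the hypothesis of the lemma already \emph{gives} you a decomposition with the stated parities, and applying the projection $g\mapsto\frac14\sum_{\varepsilon_1,\varepsilon_2}\varepsilon_1^i\varepsilon_2^jg(\varepsilon_1\cdot,\varepsilon_2\cdot)$ to the given identity $f=\sum f^{ij}$ shows directly that $f^{ij}$ equals that projection of $f$, which is all you use.
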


\begin{proof}
The $\lesssim$ part is clear, so we have to prove the reverse. 
\\
We start with the case $q<\infty$. Noting that for any pair of functions $g_1,g_2$ there always holds $|g_1|^q+|g_2|^q\lesssim |g_1+g_2|^q+|g_1-g_2|^q$ and recalling that the weight is an even in each variable function, we obtain
\begin{align*}
\|f^{i0}(x,\cdot)\|_{L_{w(p,q)}^q}^q+\|f^{i1}(x,\cdot)\|_{L_{w(p,q)}^q}^q&\lesssim \|(f^{i0}+f^{i1})(x,\cdot)\|_{L_{w(p,q)}^q}^q+\|(f^{i0}-f^{i1})(x,\cdot)\|_{L_{w(p,q)}^q}^q\\
&\asymp \|(f^{i0}+f^{i1})(x,\cdot)\|_{L_{w(p,q)}^q}^q
\end{align*}
for $i=0,1$. Similarly,
\begin{align*}
\sum_{i,j=0}^{1}\|f^{ij}\|_{L_{w(p,q)}^q}^q&\lesssim \|f^{00}+f^{01}+f^{10}+f^{11}\|_{L_{w(p,q)}^q}^q+\|f^{00}+f^{01}-f^{10}-f^{11}\|_{L_{w(p,q)}^q}^q\\
&\asymp \Big\|\sum_{i,j=0}^1 f^{ij}\Big\|_{L_{w(p,q)}^q}^q=\|f\|_{L_{w(p,q)}^q}^q.
\end{align*}
For $q=\infty$, the claim follows from the equalities
\begin{align*}
4f^{ij}(x,y)\equiv f(x,y)+(-1)^if(-x,y)+(-1)^jf(x,-y)+(-1)^{i+j}f(-x,-y).
\end{align*}
\end{proof}

Next we prove a two-dimensional analogue of \cite[L. 2.2]{DMT} (see also the one-dimensional result \cite[Th. 2.4]{S} for Lorentz spaces). Note that similar multidimensional results for Lorentz spaces were obtained in \cite{N-1} and \cite{N-2}.

\begin{lemma}\label{sag2}
Let $\{a_{mn}^{ij}\}_{m,n=1}^{\infty},\;i,j=0,1,$ be the sequence of Fourier coefficients of $f\in L(-\pi,\pi)$. 
Then for any $p\in (1,\infty),\;q\in[1,\infty],$ there holds 
\begin{align*}
\sum_{i,j=0}^1\sum_{m,n=1}^{\infty}\Big(\sup\limits_{k\ge m,\;l\geq n}\frac{1}{kl}\Big|\sum\limits_{s=1}^{k}\sum\limits_{t=1}^{l}a_{st}\Big|\Big)^q(mn)^{\frac{q}{p'}-1} \lesssim \|f\|_{L_{w(p,q)}^q}^q.
\end{align*}
\end{lemma}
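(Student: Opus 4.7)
I would start by invoking Lemma \ref{oddeven} to decompose $f = \sum_{i,j=0}^1 f^{ij}$ into its four parity components, reducing the claim to proving, for each fixed pair $(i,j)$ with $g := f^{ij}$,
\begin{align*}
\sum_{m,n=1}^\infty \Big(\sup_{k\geq m,\,l\geq n}\frac{1}{kl}\Big|\sum_{s=1}^k\sum_{t=1}^l a_{st}^{ij}\Big|\Big)^q (mn)^{\frac{q}{p'}-1} \lesssim \|g\|_{L_{w(p,q)}^q}^q.
\end{align*}
Expressing the partial sum in the integral form
\begin{align*}
\sum_{s=1}^k\sum_{t=1}^l a_{st}^{ij} = \frac{C_{ij}}{\pi^2}\int_0^\pi\int_0^\pi g(x,y)\Big(\sum_{s=1}^k \sin^{(i)}(sx)\Big)\Big(\sum_{t=1}^l \sin^{(j)}(ty)\Big)\,dx\,dy
\end{align*}
and using the classical bound $\big|\sum_{s=1}^k \sin^{(i)}(sx)\big| \leq C\min(k,1/x)$ on $(0,\pi]$ in each variable, I obtain
\begin{align*}
\frac{1}{kl}\Big|\sum_{s=1}^k\sum_{t=1}^l a_{st}^{ij}\Big| \lesssim \int_0^\pi\int_0^\pi |g(x,y)|\min\Big(1,\tfrac{1}{kx}\Big)\min\Big(1,\tfrac{1}{ly}\Big)\,dx\,dy.
\end{align*}
Since the right-hand side is non-increasing in both $k$ and $l$, the supremum over $k\geq m,\,l\geq n$ is attained at $(k,l)=(m,n)$, and the problem reduces to proving $\|Tg\|_{l_{w(p',q)}^q} \lesssim \|g\|_{L_{w(p,q)}^q}$ for the operator
\begin{align*}
Tg(m,n) := \int_0^\pi\int_0^\pi |g(x,y)|\min\Big(1,\tfrac{1}{mx}\Big)\min\Big(1,\tfrac{1}{ny}\Big)\,dx\,dy.
\end{align*}

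The kernel of $T$ is a tensor product of two one-dimensional kernels, so the estimate follows by iterating the 1D weighted inequality
\begin{align*}
\sum_{m=1}^\infty m^{\frac{q}{p'}-1}\Big(\int_0^\pi \phi(x)\min\Big(1,\tfrac{1}{mx}\Big)\,dx\Big)^q \lesssim \int_0^\pi |\phi(x)|^q x^{\frac{q}{p}-1}\,dx
\end{align*}
(with the natural $\sup_m$ analogue for $q=\infty$), which is the one-dimensional prototype from \cite[L. 2.2]{DMT} (itself a discrete variant of \cite[Th. 2.4]{S}). This 1D prototype is proved by splitting
\begin{align*}
\int_0^\pi \phi(x)\min\Big(1,\tfrac{1}{mx}\Big)\,dx = \int_0^{1/m}\phi(x)\,dx + \frac{1}{m}\int_{1/m}^\pi \frac{\phi(x)}{x}\,dx
\end{align*}
and applying the standard weighted Hardy inequalities to the averaging and tail terms (the hypothesis $p>1$ gives convergence of the resulting integrals). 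To lift this to two dimensions I freeze $y$, set $h(m,y) := \int_0^\pi g(x,y)\min(1,1/(mx))\,dx$, and apply the 1D prototype in $m$ to the inner integral; then for each fixed $m$ I apply the prototype again in $n$ to $Tg(m,n) = \int_0^\pi h(m,y)\min(1,1/(ny))\,dy$. Assembling the two and interchanging the sum in $m$ with the integral in $y$ via Tonelli (everything is non-negative) yields $\|Tg\|^q \lesssim \|g\|_{L_{w(p,q)}^q}^q$ for $q<\infty$; the endpoint $q=\infty$ follows identically, with suprema in place of sums throughout.

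The main obstacle I anticipate is carrying out the 1D prototype inequality cleanly for the whole range $p\in(1,\infty)$, $q\in[1,\infty]$, especially at the endpoints $q=1$ and $q=\infty$ where one has to be careful with how the weighted Hardy inequality for averages and tails is invoked. Beyond that, the two-dimensional step is purely bookkeeping, since the tensor structure of the kernel permits a direct iteration without any cross terms, and the reduction via parity decomposition in the first paragraph handles the four $(i,j)$ blocks uniformly.
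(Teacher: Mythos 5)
Your proposal is correct, and it takes a genuinely different route from the paper. The common starting point is the pointwise bound via the conjugate Dirichlet (or Dirichlet) kernel; in fact the paper's kernel $I_m(x)=\frac{\cos\frac{x}{2}(1-\cos mx)}{m\sin\frac{x}{2}}+\frac{\sin mx}{m}$ is exactly $\frac{2}{m}\sum_{s=1}^m\sin sx$, so the two quantities $\frac1{mn}\big|\sum\sum a_{st}\big|$ are being controlled in the same way at the outset. Where the arguments diverge is in how the operator
\begin{align*}
Tg(m,n)=\int_0^\pi\int_0^\pi |g(x,y)|\min\Big(1,\tfrac1{mx}\Big)\min\Big(1,\tfrac1{ny}\Big)\,dx\,dy
\end{align*}
is shown to map $L_{w(p,q)}^q$ into $l_{w(p',q)}^q$. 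The paper uses the Lorentz-norm bound $\|I_m\|_{l_{p,q}}\lesssim m^{-1/p}$ together with H\"older, which only gives an $l_{p',\infty}$-type estimate, and then runs two consecutive real interpolation arguments (Maligranda's theorem for sublinear operators plus the Bergh--L\"ofstr\"om identifications \eqref{bl1}, \eqref{bl2}) to upgrade to $l_{p',q}$ in each variable; this mirrors the one-dimensional strategy of \cite{DMT}. You instead observe that the kernel of $T$ is the tensor product of two monotone one-dimensional kernels, reduce the problem to a scalar weighted inequality of Hardy type by the split $\int_0^{1/m}\phi+\frac1m\int_{1/m}^\pi\phi/x$, and iterate in the two variables via Tonelli. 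Both proofs are correct; yours is more elementary and self-contained (no interpolation machinery), while the paper's is structured so as to parallel the one-dimensional prototype and to transfer more readily to Lorentz-scale targets. Two small points worth flagging in a clean write-up: (i) the sup over $k\ge m,\,l\ge n$ is dropped because $\min(1,1/(kx))$ is nonincreasing in $k$, which is exactly the reason the pointwise bound is taken in the form $\min(1,1/(kx))$ rather than $\min(k,1/x)/k$ before the sup is applied, so state that explicitly; (ii) at $q=1$ and $q=\infty$ the weighted Hardy inequalities you invoke hold with the same exponent restrictions $\gamma<-1$ and $\gamma>-1$ (for the averaging and tail terms, respectively), and these are satisfied here for all $p\in(1,\infty)$, so the endpoints require no separate treatment beyond replacing sums by suprema.
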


\begin{proof}[Proof of Lemma \ref{sag2}]
Note that if we prove the statement of the lemma for odd in each variable functions $f\in L(-\pi,\pi)$, then it will be true for any integrable $f$. Indeed, the relation for such functions implies the same for all functions that are either odd or even in each variable due to the boundedness of the Hilbert transform in weighted Lebesgue spaces. The general case follows then by Lemma \ref{oddeven}. Thus, we can assume that $a_{mn}^{ij}=0$ if $(i,j)\neq (0,0)$ and omit the upper indices of $a_{mn}^{00}$.

According to \cite[(2.4), (2.7)]{DMT}, for any $1< p<\infty,\;1\leq q\leq \infty$, and $m\in\mathbb{N}$, there holds
\begin{align*}
\|I_m(x)\|_{l_{p,q}}:=\bigg\|\frac{\cos \frac{x}{2}(1 - \cos mx )}{m\sin\frac{x}{2}} + \frac{\sin mx }{m}\bigg\|_{l_{p,q}}\lesssim m^{-\frac{1}{p}}.
\end{align*}
Therefore, for any $1< p_1,p_2<\infty,\;1< q\leq \infty$, and $m,n\in\mathbb{N}$, by H\"older's inequality
\begin{align}\label{lebes}
\frac{1}{mn}\Big|\sum\limits_{k=1}^{m}\sum\limits_{l=1}^{n}a_{kl}\Big|  &\leq
\int_0^{\pi}\int_0^{\pi} |f(x,y)I_m(x)I_n(y)| dx dy \nonumber\\
&\leq\int_0^{\pi}|I_n(y)|\Big(\int_0^{\pi} x^{\frac{q}{p_1}-1}|f(x,y)|^qdx\Big)^{\frac{1}{q}}\Big(\int_0^{\pi}x^{\frac{q'}{p_1'}}|I_m(x)|^{q'} dx\Big)^{\frac{1}{q'}}\;dy\nonumber\\
&\lesssim m^{-\frac{1}{p'_1}}\int_0^{\pi}|I_n(y)|\Big(\int_0^{\pi} x^{\frac{q}{p_1}-1}|f(x,y)|^qdx\Big)^{\frac{1}{q}}\;dy\nonumber\\
&\leq m^{-\frac{1}{p'_1}}\Big(\int_0^{\pi}\int_0^{\pi}x^{\frac{q}{p_1}-1}y^{\frac{q}{p_2}-1}|f(x,y)|^q dx dy\Big)^{\frac{1}{q}}\Big(\int_0^{\pi}y^{\frac{q'}{p'_2}-1}|I_n(y)|dy\Big)^{\frac{1}{q'}}\nonumber\\
&\lesssim m^{-\frac{1}{p'_1}}n^{-\frac{1}{p'_2}}\Big(\int_0^{\pi}\int_0^{\pi}x^{\frac{q}{p_1}-1}y^{\frac{q}{p_2}-1}|f(x,y)|^q dx dy\Big)^{\frac{1}{q}}\nonumber\\
&=:m^{-\frac{1}{p'_1}}n^{-\frac{1}{p'_2}}\|f\|_{L_{w((p_1, p_2),q)}^q}.
\end{align}
Similarly, if $q=1$,
\begin{align*}
\frac{1}{mn}\Big|\sum\limits_{k=1}^{m}\sum\limits_{l=1}^{n}a_{kl}\Big|  &\leq 
\int_0^{\pi}\int_0^{\pi} |f(x,y)I_m(x)I_n(y)| dx dy \\
&\leq \sup_{x\in [0,\pi]}x^{\frac{1}{p'_1}}|I_m(x)|\cdot \sup_{y\in [0,\pi]}y^{\frac{1}{p'_2}}|I_n(y)|\cdot\int_0^{\pi}\int_0^{\pi}x^{\frac{1}{p_1}-1}y^{\frac{1}{p_2}-1}|f(x,y)|\;dxdy\\
&\lesssim m^{-\frac{1}{p'_1}}n^{-\frac{1}{p'_2}}\|f\|_{L_{w((p_1,p_2),1)}^1}.
\end{align*}
Thus, for any $1< p_1,p_2<\infty,\;1\leq q\leq \infty$, and $m\in\mathbb{N}$, we obtain
\begin{align}\label{inter1}
m^{\frac{1}{p'_1}}\sup_{n\in\mathbb{N}}n^{\frac{1}{p'_2}}\sup\limits_{k\ge m,\;l\geq n}\frac{1}{kl}\Big|\sum\limits_{s=1}^{k}\sum\limits_{t=1}^{l}a_{st}\Big|\leq C\|f\|_{L_{w((p_1,p_2),q)}^q},
\end{align}
where the constant $C$ does not depend on $m$. 

Now, in order to prove the desired inequality, we will invoke interpolation theory. Recall that the norm of a sequence ${\bf c}:=\{c_k\}_{k=1}^{\infty}$ in the discrete Lorentz space $l_{p,q}$, for $p\in(1,\infty)$ and $q\in (0,\infty]$, is defined as follows
\begin{align*}
\|{\bf c}\|_{l_{p,q}}:=
\begin{cases}
\Big(\sum_{k=1}^{\infty}k^{\frac{q}{p}-1}|c^*_k|^q\Big)^{\frac{1}{q}},\quad\text{if}\;q < \infty, \\
\sup\limits_{k\geq 1}k^{\frac{1}{p}} |c^*_k|,\quad \text{if} \; q = \infty,
\end{cases}
\end{align*}
where $\{c^*_k\}$ stands for the decreasing rearrangement of ${\bf c}$. It follows from \cite[Th. 5.3.1]{BL} that for $\theta\in (0,1)$ and $q\in (0,\infty]$, for the discrete Lorentz spaces $l_{p_1,\infty}$ and $l_{p_2,\infty},\;0<p_1<p_2\leq \infty,$ with $\theta/p_1+(1-\theta)/p_2=1/p$, we have
\begin{align}\label{bl1}
(l_{{p_{1}},\infty},l_{{p_{2}},\infty})_{\theta,q}= l_{p,q}.
\end{align}
For the Lebesgue spaces $L_{w((p_{11},p_{21}),q)}^q$ and $L_{w((p_{21},p_{22}),q)}^q,\;q\in(0,\infty],$ (see \eqref{lebes}), with $\theta/p_{11}+(1-\theta)/p_{12}=1/p_1,\;\theta/p_{21}+(1-\theta)/p_{22}=1/p_2$, \cite[Th. 5.4.1]{BL} gives 
\begin{align}\label{bl2}
(L_{w((p_{11},p_{21}),q)}^q, L_{w((p_{12},p_{22}),q)}^q)_{\theta,q}=L_{w((p_1,p_2),q)}^q.
\end{align}
For any fixed $m_0\in\mathbb{N}$, in light of the monotonicity of $\sup\limits_{k\ge m_0,\;l\geq n}\frac{1}{kl}\Big|\sum\limits_{s=1}^{k}\sum\limits_{t=1}^{l}a_{st}\Big|$ in $n$, \eqref{inter1} is equivalent to
\begin{align}\label{inter2}
m_0^{\frac{1}{p'_1}}\Big\|\Big\{\sup\limits_{k\ge m_0,\;l\geq n}\frac{1}{kl}\Big|\sum\limits_{s=1}^{k}\sum\limits_{t=1}^{l}a_{st}\Big|\Big\}_{n=1}^{\infty}\Big\|_{l_{p'_2,\infty}}\leq C\|f\|_{L_{w((p_1,p_2),q)}^q}.
\end{align}
Fix now $p_1,p_2\in (1,\infty)$ and $q\in [1,\infty]$. Take $\theta\in (0,1)$ and $p_{11}<p_{12},\;p_{21}<p_{22}$ such that $\theta/p_{11}+(1-\theta)/p_{12}=1/p_1$ and $\theta/p_{21}+(1-\theta)/p_{22}=1/p_2$. Note that, for any fixed $m_0$, the operator $$T_{m_0}f=\Big\{\sup\limits_{k\ge m_0,\;l\geq n}\frac{1}{kl}\Big|\sum\limits_{s=1}^{k}\sum\limits_{t=1}^{l}a_{st}\Big|\Big\}_{n=1}^{\infty}$$ is sublinear and that due to \eqref{inter2} $$T_{m_0}:L_{w((p_1,p_{21}),q)}^q\to l_{p'_{21},\infty}\quad\text{and}\quad T_{m_0}:L_{w((p_1,p_{22}),q)}^q\to l_{p'_{22},\infty},$$ 
where the involved constants do not depend on $m_0$. Then it follows from \cite[Th. 6]{Ma}, \eqref{bl1}, and \eqref{bl2} that
\begin{align*}
T_{m_0}:L_{w((p_1,p_{2}),q)}^q&=(L_{w((p_1,p_{21}),q)}^q,L_{w((p_1,p_{22}),q)}^q)_{\theta,q}\to (l_{p'_{21},\infty},l_{p'_{22},\infty})_{\theta,q}=l_{p'_2,q},
\end{align*}
so we arrive at
\begin{align}\label{inter3}
m^{\frac{1}{p'_1}}\Big\|\Big\{\sup\limits_{k\ge m,\;l\geq n}\frac{1}{kl}\Big|\sum\limits_{s=1}^{k}\sum\limits_{t=1}^{l}a_{st}\Big|\Big\}_{n=1}^{\infty}\Big\|_{l_{p_2,q}}\lesssim\|f\|_{L_{w((p_1,p_2),q)}^q},
\end{align}
for any $m$. Now we note that $$\Big\|\Big\{\sup\limits_{k\ge m,\;l\geq n}\frac{1}{kl}\Big|\sum\limits_{s=1}^{k}\sum\limits_{t=1}^{l}a_{st}\Big|\Big\}_{n=1}^{\infty}\Big\|_{l_{p_2,q}}=\Big(\sum_{n=1}^{\infty}n^{\frac{q}{p_2}-1}\Big(\sup\limits_{k\ge m,\;l\geq n}\frac{1}{kl}\Big|\sum\limits_{s=1}^{k}\sum\limits_{t=1}^{l}a_{st}\Big|\Big)^q\Big)^{1/q}$$ is decreasing in $m$ for any $p_2\in (1,\infty)$ and that the operator $$Tf=\Big\{\Big(\sum_{n=1}^{\infty}n^{\frac{q}{p_2}-1}\Big(\sup\limits_{k\ge m,\;l\geq n}\frac{1}{kl}\Big|\sum\limits_{s=1}^{k}\sum\limits_{t=1}^{l}a_{st}\Big|\Big)^q\Big)^{1/q}\Big\}_{m=1}^{\infty}$$ is sublinear. Since according to \eqref{inter3} we have $$T:L_{w((p_{11},p_{2}),q)}^q\to l_{p'_{11},\infty}\quad\text{and}\quad T:L_{w((p_{12},p_{2}),q)}^q\to l_{p'_{12},q},$$ 
we can once again apply \cite[Th. 6]{Ma} and obtain
\begin{align*}
T:L_{w((p_1,p_{2}),q)}^q=(L_{w((p_{11},p_{2}),q)}^q,L_{w((p_{12},p_{2}),q)}^q)_{\theta,q}\to (l_{p'_{11},\infty},l_{p'_{12},\infty})_{\theta,q}=l_{p'_1,q}.
\end{align*}
The latter means that
\begin{align*}
\Big\|\Big\{\Big(\sum_{n=1}^{\infty}n^{\frac{q}{p_2}-1}\Big(\sup\limits_{k\ge m,\;l\geq n}\frac{1}{kl}\Big|\sum\limits_{s=1}^{k}\sum\limits_{t=1}^{l}a_{st}\Big|\Big)^q\Big)^{\frac{1}{q}}\Big\}_{m=1}^{\infty}\Big\|_{l_{p_1,q}}\lesssim\|f\|_{L_{w((p_1,p_2),q)}^q},
\end{align*}
whence the claim follows by putting $p_1=p_2=p$.
\end{proof}

\begin{proof} [Proof of Theorem \ref{m}] In light of Lemma \ref{oddeven} it suffices to prove the theorem only for either odd or even in each variable functions, omitting therefore the upper indices of $a_{mn}$.

We start with the part a). Due to Lemma \ref{sag2} there holds
\begin{align}\label{mean}
 \|f\|_{L_{w(p,q)}^q}^q &\gtrsim \sum_{m,n=1}^{\infty}\Big(\sup\limits_{k\ge m,\;l\geq n}\frac{1}{kl}\Big|\sum\limits_{s=1}^{k}\sum\limits_{t=1}^{l}a_{st}\Big|\Big)^q(mn)^{\frac{q}{p'}-1}\nonumber\\
 &\asymp \sum_{m,n=0}^{\infty} 2^{(m+n)\frac{q}{p'}}\Big(\sup_{k\ge 2^m,\;l\geq 2^n}
\frac{1}{kl}\Big|\sum_{i=1}^k\sum_{j=1}^l a_{ij}\Big|\Big)^q=: \sum_{m,n=0}^{\infty} P_{mn}.
\end{align}
Denote
$$
W_{mn} := \sum_{k=2^m}^{2^{m+1}-1}\sum_{l=2^n}^{2^{n+1}-1}|a_{kl}|^q(kl)^{\frac{q}{p'}-1}.
$$

Consider first $GM^c_1$ sequences. Let us fix some $T>1$. We call a pair $(m,n)$ {\it good} (we write $(m,n)\in G$), if either $mn=0$ or $A_{m-1,n-1}\leq TA_{mn}$.
We have
\begin{align*}
\sum_{k,l=1}^{\infty}|a_{kl}|^q(kl)^{\frac{q}{p'}-1} &= \sum_{m,n=0}^{\infty} W_{mn}
 \le \sum_{m=0}^{\infty} W_{m0} + \sum_{n=0}^{\infty} W_{0n}+
\sum_{(m,n) \in G\cap \mathbb{N}^2} W_{mn} + \sum_{(m,n) \in G } \sum_{(k,l) \in B_{mn}} W_{kl}\\
&=: J_1 + J_2 + J_3 + J_4,
\end{align*}
where $B_{mn},\;(m,n)\in G,$ stands for the set of all pairs $(k,l)\notin G$ such that $k=m+t,\;l=n+t$ for some $t\in\mathbb{N}$.

According to the one-dimentional Hardy-Littlewood theorem for $GM$ sequences \cite[Th. 1.2]{DMT}, we obtain
\begin{align}\label{J1}
J_1=\sum_{m=0}^{\infty} W_{m0}=\sum_{k=1}^{\infty}|a_{k1}|^q k^{\frac{q}{p'}-1}\lesssim \|g\|_{L_{w(p,q)}^q}^q\lesssim \|f\|_{L_{w(p,q)}^q}^q,
\end{align}
where $g(x)=\int_{-\pi}^{\pi}f(x,y)\sin y\;dy$. A similar estimate is valid for $J_2$.

Consider a pair $(m,n)\in G\cap \mathbb{N}^2$. Denote the rectangles we constructed in Lemma \ref{le}a) $[s_{mn}^1,s_{mn}^2]\times [t_{mn}^1,t_{mn}^2]$, so we have
\begin{align*}
P_{m-1,n-1}&=2^{(m+n-2)\frac{q}{p'}}\Big(\sup_{k\ge 2^{m-1},\;l\geq 2^{n-1}}
\frac{1}{kl}\Big|\sum_{i=1}^k\sum_{j=1}^l a_{ij}\Big|\Big)^q\\
&\gtrsim 2^{(m+n)\frac{q}{p'}-(m+n)q}\Bigg(
\Big|\sum_{i=1}^{s_{mn}^1-1}\sum_{j=1}^{t_{mn}^1-1} a_{ij}\Big|^q+\Big|\sum_{i=1}^{s_{mn}^1-1}\sum_{j=1}^{t_{mn}^2} a_{ij}\Big|^q+\Big|\sum_{i=1}^{s_{mn}^2}\sum_{j=1}^{t_{mn}^1-1} a_{ij}\Big|^q+\Big|\sum_{i=1}^{s_{mn}^2}\sum_{j=1}^{t_{mn}^2} a_{ij}\Big|^q\Bigg)\\
&\gtrsim 2^{(m+n)\frac{q}{p'}-(m+n)q}
\Big|\sum_{i=s_{mn}^1}^{s_{mn}^2}\sum_{j=t_{mn}^1}^{t_{mn}^2} a_{ij}\Big|^q\gtrsim 2^{(m+n)\frac{q}{p'}}A_{mn}^q\gtrsim W_{mn}.
\end{align*}
Here we used the inequality
\begin{align*}
|x+y+z+t|+|x+y|+|x+z|+|x|\ge |z+t|+|z|\ge |t|,
\end{align*}
valid for any $x,y,z,t\in\mathbb{C}$.

Hence, using \eqref{mean}, we obtain
\begin{align}\label{J3}
J_3 =\sum_{(m,n)\in G\cap \mathbb{N}^2} W_{mn} \lesssim \sum_{(m,n)\in G\cap \mathbb{N}^2} P_{m-1,n-1} \leq\|f\|_{L_{w(p,q)}^q}^q.
\end{align}
Finally, combining \eqref{J1}, the similar estimate for $J_2$, and \eqref{J3}, we derive
\begin{align*}
J_4\leq \sum_{(m,n)\in G}W_{mn}\sum_{j=1}^{\infty}T^{-j}\leq \frac{1}{1-T^{-1}}(J_1+J_2+J_3)\lesssim \|f\|_{L_{w(p,q)}^q}^q,
\end{align*}
which concludes the proof of the first part for the case of $GM^c_1$. 

If we replace $GM^c_1$ by $GM^c_2$, i.e. \eqref{cond2} by \eqref{cond22}, we change the definition of a good pair of numbers to the following one: we call a pair $(m,n)$ good, if either $mn=0$ or $A_{m+1,n-1}\leq TA_{mn}$. The rest of the proof is the same in light of Lemma \ref{le}b) with the only changes: now $B_{mn},\;(m,n)\in G,$ stands for the set of all pairs $(k,l)\notin G$ such that $k=m-t,\;l=n+t$ for some $t\in\mathbb{N}$ and $P_{m-1,n-1}$ in \eqref{J3} becomes $P_{m+1,n-1}$.

Turn now to the part b). Note that if $\{a_{mn}\}\in GM^c_1\cup GM^c_2$ and $\sum_{m,n=1}^{\infty}|a_{mn}|^q(mn)^{\frac{q}{p'}-1}<\infty$, then we have $\sum_{k=1}^{\infty}\sum_{l=1}^{\infty}|\Delta^{11}a_{kl}|<\infty$, which implies that the corresponding trigonometric series converges in the Pringsheim sense everywhere on $(0,2\pi)^2$ and is the Fourier series of its sum (see \cite[L. 4]{D-1}). Indeed, under condition \eqref{cond2} we have by \eqref{ineq} and H\"older's inequality
\begin{align*}
&\sum_{k,l=1}^{\infty}|\Delta^{11}a_{kl}|\lesssim\sum_{k=0}^{\infty}|a_{2^k,2^k}|\lesssim\sum_{k=0}^{\infty}|a_{2^k,2^k}|\sum_{m=2^{k-1}}^{2^k}\sum_{n=2^{k-1}}^{2^k}(mn)^{-1}\lesssim\sum_{m,n=1}^{\infty}|a_{mn}|(mn)^{-1}\\
=&\sum_{m,n=1}^{\infty}|a_{mn}|(mn)^{\frac{1}{p'}-\frac{1}{q}}(mn)^{-\frac{1}{p'}-\frac{1}{q'}}\lesssim\Big(\sum_{m,n=1}^{\infty}|a_{mn}|^q(mn)^{\frac{q}{p'}-1}\Big)^{\frac{1}{q}}\Big(\sum_{m,n=1}^{\infty}(mn)^{-\frac{q'}{p'}-1}\Big)^{\frac{1}{q'}}<\infty,
\end{align*}
and similarly under \eqref{cond22},
\begin{align*}
\sum_{k=1}^{\infty}\sum_{l=1}^{\infty}|\Delta^{11}a_{kl}|&\lesssim\sum_{k=0}^{\infty}|a_{2^{k+1},2^k}|\lesssim\sum_{k=0}^{\infty}|a_{2^{k+1},2^k}|\sum_{m=2^{k+1}}^{2^{k+2}}\sum_{n=2^{k-1}}^{2^k}(mn)^{-1}\lesssim \sum_{m,n=1}^{\infty}|a_{mn}|(mn)^{-1}<\infty.
\end{align*}
 
We will provide the proof only for the system $\{\sin mx,\sin ny\}$, the other cases will follow then from boundedness of Hilbert transform in weighted Lebesgue spaces.

For $(x,y)\in (\frac{\pi}{m+1},\frac{\pi}{m}]\times (\frac{\pi}{n+1},\frac{\pi}{n}]$, we have

\begin{align*}
|f(x,y)|&=\Big|\sum_{k=1}^{\infty}\sum_{l=1}^{\infty}a_{kl}\sin kx\sin ly\Big|\leq xy\sum_{k=1}^m\sum_{l=1}^n kl|a_{kl}|+x\sum_{k=1}^m k\sum_{l=n}^{\infty}|a_{kl}-a_{k,l+1}||\tilde{D}_l(y)-\tilde{D}_n(y)|\nonumber\\
&+y\sum_{l=1}^n l\sum_{k=m}^{\infty}|a_{kl}-a_{k+1,l}||\tilde{D}_k(x)-\tilde{D}_m(x)|\nonumber\\
&+\sum_{k=m}^{\infty}\sum_{l=n}^{\infty}|\Delta^{11}a_{kl}|\cdot|(\tilde{D}_k(x)-\tilde{D}_m(x))(\tilde{D}_l(y)-\tilde{D}_n(y))|\lesssim \frac{1}{mn}\sum_{k=1}^m\sum_{l=1}^n kl|a_{kl}|\nonumber\\
&+\frac{n}{m}\sum_{k=1}^m k\sum_{l=n}^{\infty}|a_{kl}-a_{k,l+1}|+\frac{m}{n}\sum_{l=1}^n l\sum_{k=m}^{\infty}|a_{kl}-a_{k+1,l}|+mn\sum_{k=m}^{\infty}\sum_{l=n}^{\infty}|\Delta^{11}a_{kl}|.
\end{align*}
Applying condition \eqref{cond2}, we derive 
\begin{align*}
|f(x,y)|&\lesssim \frac{1}{mn}\sum_{k=1}^m\sum_{l=1}^n kl|a_{kl}|+\frac{n}{m}\sum_{k=1}^m k\sum_{t=0}^{\infty}|a_{k,2^tn}|+\frac{m}{n}\sum_{l=1}^n l\sum_{t=0}^{\infty}|a_{2^tm,l}|+mn\sum_{t=0}^{\infty}|a_{2^tm,2^tn}|\nonumber\\
&\lesssim \frac{1}{mn}\sum_{k=1}^m\sum_{l=1}^n kl|a_{kl}|+\frac{n}{m}\sum_{k=1}^m k\sum_{l=\lceil n/2\rceil}^{\infty}\frac{|a_{kl}|}{l}+\frac{m}{n}\sum_{l=1}^n l\sum_{k=\lceil m/2\rceil}^{\infty}\frac{|a_{kl}|}{k}+mn\sum_{k=\lceil m/2\rceil}^{\infty}\sum_{l=\lceil n/2\rceil}^{\infty}\frac{|a_{kl}|}{kl}.
\end{align*}
In turn, \eqref{cond22} yields
\begin{align*}
|f(x,y)|&\lesssim \frac{1}{mn}\sum_{k=1}^m\sum_{l=1}^n kl|a_{kl}|+\frac{n}{m}\sum_{k=1}^m k\sum_{t=0}^{\infty}|a_{k,2^tn}|+\frac{m}{n}\sum_{l=1}^n l\sum_{t=0}^{\infty}|a_{2^{t+1}m,l}|+mn\sum_{t=0}^{\infty}|a_{2^{t+1}m,2^tn}|\nonumber\\
&\lesssim \frac{1}{mn}\sum_{k=1}^m\sum_{l=1}^n kl|a_{kl}|+\frac{n}{m}\sum_{k=1}^m k\sum_{l=\lceil n/2\rceil}^{\infty}\frac{|a_{kl}|}{l}+\frac{m}{n}\sum_{l=1}^n l\sum_{k=2m}^{\infty}\frac{|a_{kl}|}{k}+mn\sum_{k=2m}^{\infty}\sum_{l=\lceil n/2\rceil}^{\infty}\frac{|a_{kl}|}{kl}.
\end{align*}
Hence, in both cases we get
\begin{align}\label{bo}
|f(x,y)|&\lesssim \frac{1}{mn}\sum_{k=1}^m\sum_{l=1}^n kl|a_{kl}|+\frac{n}{m}\sum_{k=1}^m k\sum_{l=\lceil n/2\rceil}^{\infty}\frac{|a_{kl}|}{l}+\frac{m}{n}\sum_{l=1}^n l\sum_{k=\lceil m/2\rceil}^{\infty}\frac{|a_{kl}|}{k}+mn\sum_{k=\lceil m/2\rceil}^{\infty}\sum_{l=\lceil n/2\rceil}^{\infty}\frac{|a_{kl}|}{kl}\nonumber\\
&=:I^1_{m,n}+I^2_{m,n}+I^3_{m,n}+I^4_{m,n}.
\end{align}
Thus, for $q<\infty$, denoting $\a:=1-q/p$, we obtain
\begin{align*}
\|f\|_{L_{p,q}^q}^q &\asymp \int\limits_0^{\pi}\int\limits_0^{\pi}(xy)^{-\a}|f(x,y)|^q\;dx dy\lesssim  \sum_{m=1}^{\infty}\sum_{n=1}^{\infty}\int\limits_{\frac{\pi}{m+1}}^{\frac{\pi}{m}}\int\limits_{\frac{\pi}{n+1}}^{\frac{\pi}{n}}(xy)^{-\a}(I^1_{m,n}+I^2_{m,n}+I^3_{m,n}+I^4_{m,n})^q\;dx dy\\
&\asymp \sum_{m=1}^{\infty}\sum_{n=1}^{\infty}(mn)^{\a-2}((I^1_{m,n})^q+(I^2_{m,n})^q+(I^3_{m,n})^q+(I^4_{m,n})^q).
\end{align*}
Recall the Hardy-type inequalities for power weights (see, for instance, \cite[(0.6), (0.10), (1.102)]{KP}) 
 for $q\geq 1$:
\begin{align}\label{Hard1}
\sum_{n=1}^{\infty}n^{\gamma}\Big(\sum_{k=1}^na_k\Big)^q\lesssim_q\sum_{n=1}^{\infty}n^{\gamma+q}a_n^q,\quad\text{for}\; \gamma<-1,
\end{align}
and its dual,
\begin{align}\label{Hard2}
\sum_{n=1}^{\infty}n^{\gamma}\Big(\sum_{k=n}^{\infty}a_k\Big)^q\lesssim_q\sum_{n=1}^{\infty}n^{\gamma+q}a_n^q,\quad\text{for}\;\gamma>-1.
\end{align}
Using \eqref{Hard1} in each variable we arrive at
\begin{align*}
&\sum_{m=1}^{\infty}\sum_{n=1}^{\infty}(mn)^{\a-2}(I^1_{m,n})^q=\sum_{m=1}^{\infty}m^{\a-2-q}\sum_{n=1}^{\infty}n^{\a-2-q}\Big(\sum_{l=1}^nl\sum_{k=1}^m k|a_{kl}|\Big)^q\\
&\lesssim \sum_{n=1}^{\infty}n^{\a-2+q}\sum_{m=1}^{\infty}m^{\a-2-q}\Big(\sum_{k=1}^m k|a_{kn}|\Big)^q\lesssim \sum_{m=1}^{\infty}\sum_{n=1}^{\infty}(mn)^{\a-2+q}|a_{mn}|^q\\
\end{align*}
and
\begin{align*}
\sum_{m=1}^{\infty}\sum_{n=1}^{\infty}(mn)^{\a-2}(I^2_{m,n})^q&=\sum_{m=1}^{\infty}m^{\a-2-q}\sum_{n=1}^{\infty}n^{\a-2+q}\Big(\sum_{l=\lceil n/2\rceil}^{\infty}\frac{1}{l}\sum_{k=1}^m k|a_{kl}|\Big)^q\\
&\asymp \sum_{m=1}^{\infty}m^{\a-2-q}\sum_{n=1}^{\infty}n^{\a-2+q}\Big(\sum_{l=n}^{\infty}\frac{1}{l}\sum_{k=1}^m k|a_{kl}|\Big)^q\\
&\lesssim \sum_{n=1}^{\infty}n^{\a-2+q}\sum_{m=1}^{\infty}m^{\a-2-q}\Big(\sum_{k=1}^m k|a_{kn}|\Big)^q\\
&\lesssim\sum_{m=1}^{\infty}\sum_{n=1}^{\infty}(mn)^{\a-2+q}|a_{mn}|^q,
\end{align*}
where we used inequality \eqref{ineq}. The similar estimate holds for $I^3$. And finally, due to \eqref{Hard2}
\begin{align*}
\sum_{m=1}^{\infty}\sum_{n=1}^{\infty}(mn)^{\a-2}(I^4_{m,n})^q&=\sum_{m=1}^{\infty}m^{\a-2+q}\sum_{n=1}^{\infty}n^{\a-2+q}\Big(\sum_{l=\lceil n/2\rceil}^{\infty}\frac{1}{l}\sum_{k=\lceil m/2\rceil}^{\infty} \frac{|a_{kl}|}{k}\Big)^q\\
&\asymp \sum_{m=1}^{\infty}m^{\a-2+q}\sum_{n=1}^{\infty}n^{\a-2+q}\Big(\sum_{l=n}^{\infty}\frac{1}{l}\sum_{k=m}^{\infty} \frac{|a_{kl}|}{k}\Big)^q\\
&\lesssim \sum_{m=1}^{\infty}m^{\a-2+q}\sum_{n=1}^{\infty}n^{\a-2+q}\Big(\sum_{k=m}^{\infty} \frac{|a_{kn}|}{k}\Big)^q \\
&\lesssim \sum_{m=1}^{\infty}\sum_{n=1}^{\infty}(mn)^{\a-2+q}|a_{mn}|^q,
\end{align*}
which completes the proof for the case $q\in[1,\infty)$. For $q=\infty,$ using \eqref{bo} we can write
\begin{align*}
\sup_{(x,y)\in (\frac{\pi}{m+1},\frac{\pi}{m}]\times (\frac{\pi}{n+1},\frac{\pi}{n}]}(xy)^{\frac{1}{p}}|f(x,y)|\leq (mn)^{-\frac{1}{p}}(I^1_{m,n}+I^2_{m,n}+I^3_{m,n}+I^4_{m,n}).
\end{align*}
Next,
\begin{align*}
(mn)^{-\frac{1}{p}}I^1_{m,n}&=(mn)^{-\frac{1}{p}-1}\sum_{k=1}^m\sum_{l=1}^{n} kl|a_{kl}|\\
&\leq (mn)^{-\frac{1}{p}-1}\sum_{k=1}^m\sum_{l=1}^{n} (kl)^{\frac{1}{p}} \sup_{k,l}\Big((kl)^{\frac{1}{p'}}|a_{kl}|\Big)\lesssim \sup_{k,l}\Big((kl)^{\frac{1}{p'}}|a_{kl}|\Big).
\end{align*}
We also have
\begin{align*}
(mn)^{-\frac{1}{p}}&I^2_{m,n}=(mn)^{-\frac{1}{p}}\frac{n}{m}\sum_{k=1}^m\sum_{l=\lceil n/2\rceil}^{\infty}\frac{k}{l}|a_{kl}|\lesssim \sup_{k,l}\Big((kl)^{\frac{1}{p'}}|a_{kl}|\Big),
\end{align*}
and the similar estimate for $I^3$. Finally,
\begin{align*}
(mn)^{-\frac{1}{p}}&I^4_{m,n}=(mn)^{-\frac{1}{p}}mn\sum_{k=\lceil m/2\rceil}^{\infty}\sum_{l=\lceil n/2\rceil}^{\infty}\frac{|a_{kl}|}{kl}\lesssim \sup_{k,l}\Big((kl)^{\frac{1}{p'}}|a_{kl}|\Big),
\end{align*}
which completes the proof of the theorem.
\end{proof}

\begin{remark} For the spaces $L_{w(p,q)}^q(0,2\pi)$ in place of $L_{w(p,q)}^q(-\pi,\pi)$, the assertion of Theorem \ref{m} still holds for $q\leq p$ but fails for $q>p$.

Indeed, for $q>p$ it suffices to consider the one-dimensional sine series
\begin{align*}
f(x):=\sum_{k=1}^{\infty}k^{-\frac{1}{p'}}\log^{-\frac{1}{p}}(k+2)\sin kx=:\sum_{k=1}^{\infty} a_{k}\sin kx.
\end{align*}
We have $\sum |a_k|^p k^{p-2}= \sum k^{-1}\log^{-1} (k+2)=\infty$, so by the Hardy-Littlewood theorem $f\notin L_p$, whence $\|f\|_{L_{w(p,q)}^q(0,2\pi)}\gtrsim \|f\|_{L_p(\pi,2\pi)}=\infty$. On the other hand, 
$$\|f\|_{L_{w(p,q)}^q(-\pi,\pi)}\asymp\sum |a_k|^qk^{\frac{q}{p'}-1}= \sum k^{-1}\log^{-\frac{q}{p}} (k+2)<\infty.$$

However, for $q\leq p$, there holds $x^{q/p-1}\gtrsim 1$, so that
\begin{align*}
\|f\|_{L_{w(p,q)}^q(0,2\pi)}\asymp \|f\|_{L_{w(p,q)}^q(0,\pi)}+\|f\|_{L_{w(p,q)}^q(\pi,2\pi)}\asymp \|f\|_{L_{w(p,q)}^q(0,\pi)}\asymp \|f\|_{L_{w(p,q)}^q(-\pi,\pi)}.
\end{align*}
 
The reason of the failure of the Hardy-Littlewood relation here is that the function in case is supposed to be periodic, while a power weight is not. Thus, if one deals with weighted Lebesgue spaces on $[0,2\pi]^2$, it makes more sense to consider a weight of the type $|\sin x|^{\alpha}$ in place of $|x|^{\alpha}$, which was in fact done by many authors. Note that for a power weight, weighted integrability at $2\pi$ is equivalent to integrability at zero without weight, so, as in the example above, one has to additionally check integrability at zero. 
\end{remark}

\section{Sharpness of the result}

\begin{theorem}\label{re} For $p>2,\;q\geq p$, the claim of Theorem \ref{m}a) does not hold if we replace the $GM^c_2$ condition \eqref{cond22} by
\begin{align}\label{cond5}
\sum_{m=k}^{2k}\sum_{n=l}^{2l}|\Delta^{11}a_{mn}|\leq C|a_{2k,l}|.
\end{align}
\end{theorem}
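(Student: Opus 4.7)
I will construct an explicit counterexample for the sine--sine system: set $a_{mn}^{00}=:a_{mn}$ and all other $a_{mn}^{ij}$ equal to zero. The sequence $\{a_{mn}\}$ will satisfy the rectangle condition \eqref{cond5} but fail the corner condition \eqref{cond22} (the latter being necessary, since otherwise Theorem~\ref{m} would apply and the $\gtrsim$ part would hold), and its Fourier series $f$ will satisfy $\|f\|_{L_{w(p,q)}^q}<\infty$ while $\sum_{m,n}|a_{mn}|^q(mn)^{q/p'-1}=\infty$, contradicting the $\gtrsim$ claim.

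The candidate is block-constant on dyadic rectangles. Partition $\mathbb{N}^2$ into $Q_{k,j}:=[2^k,2^{k+1})\times[2^j,2^{j+1})$ and set
\[
a_{mn}=(-1)^k d_{k,j}\quad\text{for } (m,n)\in Q_{k,j},
\]
with positive magnitudes $d_{k,j}$ to be chosen. Because $a_{mn}$ is constant on each block, $\Delta^{11}a_{mn}$ vanishes except at the four-block corner points $(2^{k+1}-1,2^{j+1}-1)$, where $|\Delta^{11}a_{mn}|\asymp d_{k,j}$ (the sign $(-1)^k$ reinforces these differences rather than cancelling them). A rectangle $[k_0,2k_0]\times[l_0,2l_0]$ contains only $O(1)$ such corners, so \eqref{cond5} holds with constant depending only on $\{d_{k,j}\}$. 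The strip $[k_0,2k_0]\times[l_0,\infty)$ however accumulates corner contributions for all $j\geq \log_2 l_0$, giving a total $\asymp\sum_{j\geq\log_2 l_0}d_{k_0,j}$; one chooses $d_{k_0,j}$ decaying too slowly in $j$ for this to be $O(|a_{2k_0,l_0}|)$, so \eqref{cond22} fails.

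Writing
\[
f(x,y)=\sum_{k,j}(-1)^k d_{k,j}\,S_k(x)T_j(y),\quad S_k(x):=\sum_{m=2^k}^{2^{k+1}-1}\sin(mx),\ T_j(y):=\sum_{n=2^j}^{2^{j+1}-1}\sin(ny),
\]
one has $|S_k(x)|\lesssim\min(2^k,1/x)$ and similarly for $T_j$. The coefficient sum collapses to
\[
\sum_{m,n}|a_{mn}|^q(mn)^{q/p'-1}\asymp\sum_{k,j}d_{k,j}^{\,q}\,2^{(k+j)q/p'}.
\]
Take $d_{k,j}$ slightly above the critical decay $2^{-(k+j)/p'}$, e.g.\ $d_{k,j}:=2^{-(k+j)/p'}/[\log(k+2)\log(j+2)]^{1/q}$, so the displayed sum becomes $\sum(\log k\log j)^{-1}=\infty$. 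To bound $\|f\|_{L_{w(p,q)}^q}^q$ one performs Abel summation in $k$ against the alternating factor $(-1)^k$ to exploit cancellation across dyadic levels, then estimates the resulting double integral using the Hardy-type inequalities \eqref{Hard1}--\eqref{Hard2} as in the proof of Theorem~\ref{m}(b). The failure of the $\gtrsim$-part follows by comparing the finite weighted norm of $f$ with the divergent coefficient sum.

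The main obstacle is the final norm estimate. The triangle inequality on $|f(x,y)|$ diverges near the diagonal $2^k\asymp 1/x$, where $|S_k(x)|$ reaches its maximum, so one must genuinely exploit the alternation $(-1)^k$. The estimate must moreover be sharp enough that, once integrated against the power weight $|xy|^{q/p-1}$, it produces a finite quantity. The hypotheses $p>2$, $q\geq p$ enter precisely here: $q/p-1\geq 0$ makes the weight harmless near the origin, and the gap between the decay $2^{-(k+j)/p'}$ and the critical $L^p$ threshold provides exactly the margin needed to absorb the logarithmic factor $[\log(k+2)\log(j+2)]^{-1/q}$ that forces the divergence of the coefficient sum.
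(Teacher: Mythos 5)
Your construction has a fatal structural flaw: because you take $d_{k,j}=2^{-(k+j)/p'}/[\log(k+2)\log(j+2)]^{1/q}$, the coefficients factor as $d_{k,j}=\alpha_k\beta_j$ with $\alpha_k=2^{-k/p'}/[\log(k+2)]^{1/q}$ and $\beta_j=2^{-j/p'}/[\log(j+2)]^{1/q}$, and the sign $(-1)^k$ depends on $k$ alone, so $a_{mn}=u_m v_n$ is a pure tensor product. Hence $f(x,y)=u(x)\,v(y)$ with
\begin{align*}
u(x)=\sum_k(-1)^k\alpha_k S_k(x),\qquad v(y)=\sum_j\beta_j T_j(y),
\end{align*}
and the weighted norm factorizes: $\|f\|_{L_{w(p,q)}^q}=\|u\|_{L_{w(p,q)}^q}\,\|v\|_{L_{w(p,q)}^q}$. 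The factor $v$ is a sine series with \emph{nonnegative, block-decreasing} coefficients, so the one-dimensional Hardy--Littlewood theorem (Theorem~\ref{sagh} or \cite[Th.\ 1.2]{DMT}) applies and gives $\|v\|_{L_{w(p,q)}^q}^q\asymp\sum_j\beta_j^q 2^{jq/p'}=\sum_j[\log(j+2)]^{-1}=\infty$. Since $u\not\equiv 0$, this forces $\|f\|_{L_{w(p,q)}^q}=\infty$, so the inequality in Theorem~\ref{m}a) is not violated. The alternation $(-1)^k$ lives entirely on the $x$-side; no amount of Abel summation in $k$ can repair a divergence that happens in $y$, and any $d_{k,j}$ that separates in this way will run into exactly the same obstruction. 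Put differently: for a tensor-product sequence, failure of the two-dimensional Hardy--Littlewood lower bound is impossible unless a one-dimensional factor already fails it, and the $v$-factor cannot fail because its coefficients are positive and GM.

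The paper's construction is built precisely to avoid this trap, and it is instructive to see what it does differently. The magnitude $|a_{mn}|=m^{-\gamma}|g_m(n)|$ is genuinely non-separable: for each fixed $m$, once $n$ passes a threshold of size roughly $2^{m(m+1)p'}$, $g_m(n)$ becomes piecewise constant on dyadic runs of length growing \emph{quadratically} in $m$, and these runs are staggered so that for different $m$ the plateaus occur in different $n$-ranges. This is what inflates the coefficient sum $\sum_{m,n}|a_{mn}|^q(mn)^{q/p'-1}$ past finiteness without forcing a single-variable divergence. Moreover the signs are not deterministic but \emph{random}: choosing $(-1)^{\delta_m}=r_m(t_0)$ for a suitable Rademacher point $t_0$, the Khintchine inequality converts the sum over $m$ (after integrating out $x$, which is where $q\ge p$ is used to drop the $|x|^{q/p-1}$ weight and invoke orthogonality) into an $\ell^2$-sum $\big(\sum_m m^{-2/p'}\big)^{q/2}$. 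The constraint $p>2$ is needed exactly here, to make $2/p'>1$ and hence this last sum finite — not, as you suggest, to make the weight harmless at the origin. Your plan contains neither the non-separable, staggered plateau structure nor the Khintchine randomization, and both are essential.
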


\begin{proof} Assume that $p>2$ and consider the sequence
$$a_{mn}:=\frac{(-1)^{\delta_m}}{m^{\gamma}}g_m(n),$$
where $\gamma>0$, $\delta_m\in\{0,1\}$ are to be chosen later, and $g_m(n)=g_m(n,p')$ we define as follows 
\begin{equation*}
g_m(n):=
\begin{cases}
(-1)^{\delta_m}m^{-3}n^{-\frac{1}{p'}},\;\; \log_2 n<m(m+1)p',\\
 2^{-(m+t)^2-3(m+t)},\;\;((m+t)^2+m-t)p'\leq \log_2 n<((m+t)^2+3m+t)p',\;\; t\in\mathbb{Z}_+.
\end{cases}
\end{equation*}
In other words, the functions $g_m$ are constructed in the following way. First, we divide $[1,\infty)$ into intervals $I_j,\;j=0,1,...,$ so that $I_j:=\{x: 2p'j\leq \log_2 x<2p'(j+1)\}$. After that consider the lower-triangular infinite down and to the right matrix that is filled by all positive integers in increasing order going down and to the right.
\begin{align*}
\begin{matrix}
1 & & &  \\
2 & 3 &&\\
4&5&6&\\
7&8&9&10\\
\vdots &\vdots &\vdots &\vdots
\end{matrix}
\end{align*}
Next, for any $j$ we asign it the integer $i=i(j)$ if it is $i$th column that contains the element $j$. Fix some $m$ and consider the values $g_m(1),g_m(2),...$. While $i(j)\neq m$, we have $g_m(n)=(-1)^{\delta_m}m^{-3}n^{-1/p'}$ for $n\in I_j$. Once $i(j)$ becomes equal to $m$ for the first time, that is, when $\log_2 n\geq m(m+1)p'$ for the first time, we get $g_m(n)=2^{-m^2-3m}$ and this value does not change till $i(j)$ becomes equal to $m$ again and $n\in I_j$. When $i(j)$ becomes equal to $m$ for the $(s+1)$th time, the value $g_m(n)$ changes for $2^{-(m+s)^2-3(m+s)}$ (see Figure \ref{pic} for a scheme of changes of absolute values of $g_m(n)$). 

\begin{figure}[H]
\center{\includegraphics[width=0.78\linewidth]{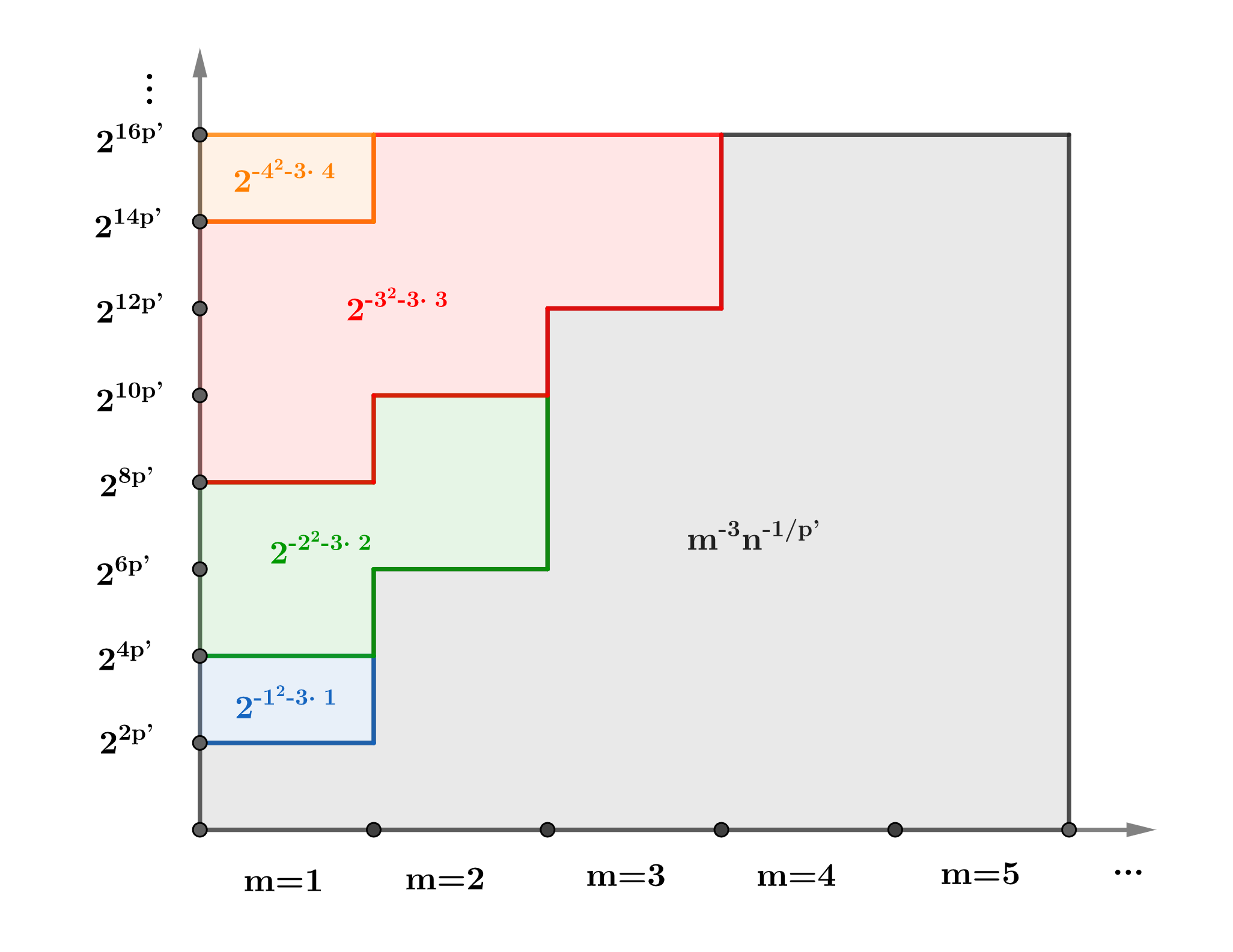}}
\caption{}
\label{pic}
\end{figure}

Fix $n\in I_j$ for some $j$ and consider $g_1(n),g_2(n),...$ Let $k$ be such that $g_m(n)$ has type $1$ if $1\leq m\leq k$ and type $0$ if $m\geq k+1$. Then
\begin{align}\label{a_g}
|g_{m}(n)|\lesssim |g_{m'}(n)|,\quad\text{for}\; k+1\leq m<m'\leq 2m.
\end{align}
Denote $m_0:=i(j+1)$. If $m_0=k+1$, then $g_1(n)=g_2(n)=...=g_{k}(n)=2^{-(k+1)^2-3(k+1)}$, otherwise, $g_m(n)=2^{-(k+1)^2-3(k+1)}$ for $m\leq m_0-1$ and $g_m(n)=2^{-k^2-3k}$ for $m_0\leq m\leq k$. Let us compare $g_k(n)$ and $g_{k+1}(n)$. There are two cases.

Case 1. $m_0=i(j+1)=k+1$. Then
\begin{align*}
|g_{k+1}(n)|=(k+1)^{-3}n^{-\frac{1}{p'}}\gtrsim (k+1)^{-3}2^{-(k+1)(k+2)}\gtrsim 2^{-(k+1)^2-3(k+1)}=g_k(n).
\end{align*}

Case 2. $m_0=i(j+1)<k+1$. Then
\begin{align*}
|g_{k+1}(n)|=(k+1)^{-3}n^{-\frac{1}{p'}}\gtrsim (k+1)^{-3}2^{-k(k+1)-m_0}\gtrsim 2^{-k^2-3k}=g_k(n).
\end{align*}
Thus, in both cases we obtain $0<g_1(n)\leq g_2(n)\leq ... \leq g_k(n)\lesssim |g_{k+1}(n)|$, whence in light of \eqref{a_g},
\begin{align}\label{mon}
|g_m(n)|\lesssim |g_{m'}(n)|,\quad\text{for all}\;m<m'\leq 2m.
\end{align}
It remains to note that for a fixed $m$, we have for $n_m:=\lceil 2^{m(m+1)p'}\rceil-1$ that $$|g_m(n_m)|=m^{-3}n_m^{-\frac{1}{p'}}\asymp m^{-3}2^{-m(m+1)}\gtrsim 2^{-m^3-3m}=g_m(n_m+1)$$
and for other $n$ there holds $g_m(n)\geq g_m(n+1)$. So, over all $|a_{mn}|$ in $r_{kl}:=[k,2k]\times [l,2l]$, the maximal is up to a constant $|a_{2k,l}|$. 

Further we note that the constructed sequence clearly satisfies \eqref{cond1}.

To prove that our sequence belongs to $GM^c_2$, let us estimate $\sum_{m=k}^{2k}\sum_{n=l}^{2l}|\Delta^{11}a_{mn}|$. Consider a quadruple 
\begin{align*}
\begin{matrix}
a_{m,n+1} & a_{m+1,n+1}\\
a_{mn} & a_{m+1,n}
\end{matrix}
\end{align*}
with $(m,n)\in r_{kl}$. Note that it can be only of the following five types
\begin{align*}
\begin{matrix}
0 & 0\\
0 & 0
\end{matrix}\qquad\;
\begin{matrix}
1 & 0\\
1 & 0
\end{matrix}\qquad\;
\begin{matrix}
1 & 0\\
0 & 0
\end{matrix}\qquad\;
\begin{matrix}
1 & 1\\
1 & 0
\end{matrix}\qquad\;
\begin{matrix}
1 & 1\\
1 & 1
\end{matrix}
\end{align*}
where $0$ stands for the terms with $\log_2 n<m(m+1)p'$, while $1$, for those with $\log_2 n\geq m(m+1)p'$. We will write $(m,n)\in T_i,\;i=1,...,5,$ if the corresponding quadruple is of the $i$th type. Note that if $(m,n)\in T_3$, then $(m-1,n)\in T_1$ and $(m+1,n)\in T_2,$ while if $(m,n)\in T_4$, then $(m-1,n)\in T_2$ and $(m+1,n)\in T_5$. By the construction, quadruples of the three last types with nonzero $\Delta^{11}a_{mn}$ can appear at most four times in $r_{kl}$, since any $(m,n)\in T_3\cup T_4$, as well as $(m,n)\in T_5$ with nonzero $\Delta^{11}a_{mn}$, satisfies $n\in I_j,n+1\in I_{j+1}$, for some $j$, which cannot happen twice in $[l,2l]$. 
 If there exists a quadruple of the first type, then
\begin{align*}
\sum_{(m,n)\in T_1\cap r_{kl}}|\Delta^{11}a_{mn}|&=\sum_{(m,n)\in T_1\cap r_{kl}}\Delta^{11}a_{mn}<\sum_{m\geq k,\;n\geq l}\Delta^{11}(m^{-3-\gamma}n^{-\frac{1}{p'}})=k^{-3-\gamma}l^{-\frac{1}{p'}}\lesssim \max_{(m,n)\in r_{kl}}|a_{mn}|.
\end{align*}
As for $(m,n)\in T_2\cap r_{kl}$, they all belong to a strip $[k',k'+1]\times [l,2l]$ for some $k'$. Indeed, otherwise there are $m_1$ and $m_2\geq m_1+2$ belonging to $[k,2k]$, and $n_1,n_2\in [l,2l]$ such that $(m_1,n_1),(m_2,n_2)\in T_2$. But it follows from $(m_1,n_1)\in T_2$ that $a_{m_1+1,k}$, and hence $a_{m_2,k}$, has type $0$, while $(m_2,n_2)\in T_2$ implies that $a_{m_2,2k}$, and hence $a_{m_1+1,2k}$, has type $1$. Thus, there exist two pairs of the form $(n,n+1)$ inside $[l,2l]$ such that $n\in I_j,n+1\in I_{j+1}$, for some $j$, which cannot be true. Therefore, all $(m,n)\in T_2\cap r_{kl}$ do belong to a strip $[k',k'+1]\times [l,2l]$, whence using
\begin{align*}
|\Delta^{11}a_{mn}|\leq |\Delta^{01}a_{mn}|+|\Delta^{01}a_{m+1,n}|=\Delta^{01}|a_{mn}|+\Delta^{01}|a_{m+1,n}|,
\end{align*} 
which is true as long as $(m,n)\in T_2\cap r_{kl}$, we deduce that the sum of $|\Delta^{11}a_{mn}|$ over $(m,n)\in T_2\cap r_{kl}$ is bounded above by four times the maximal $|a_{mn}|$ in $r_{kl}$. Combining the observations above, we arrive at
\begin{align*}
&\sum_{m=k}^{2k}\sum_{n=l}^{2l}|\Delta^{11}a_{mn}|\lesssim \max_{(m,n)\in r_{kl}}|a_{mn}|\lesssim |a_{2k,l}|,
\end{align*}
which proves \eqref{cond5}. 

Further, for any $q>0$,
\begin{align*}
\sum_{m,n=1}^{\infty}|a_{mn}|^q(mn)^{\frac{q}{p'}-1}&\gtrsim\sum_{m=1}^{\infty}m^{\frac{q}{p'}-1-\gamma q}\sum_{t=0}^{\infty}2^{-((m+t)^2+3m+3t)q}2^{((m+t)^2+3m+t)p'(\frac{q}{p'}-1)}2^{((m+t)^2+3m+t)p'}\\
&\gtrsim\sum_{m=1}^{\infty}m^{\frac{q}{p'}-1-\gamma q}=\infty,
\end{align*}
if we set $\gamma = 1/p'$.

Note that our sequence generates the Fourier sine (or cosine) series of an odd (or even) function $f$ that converges in the Pringsheim sense everywhere on $(0,2\pi)^2$ to $f$ according to \cite[L. 4]{D-1}. To prove this, since the sequence fulfils \eqref{cond1}, it suffices to show that the following sum is finite
\begin{align*}
\sum_{m,n=1}^{\infty}|\Delta^{11}a_{mn}|&\leq\sum_{(m,n)\in T_1}\Delta^{11}a_{mn}+\sum_{(m,n)\in T_2\cup T_5}(|\Delta^{01}a_{mn}|+|\Delta^{01}a_{m+1,n}|)\\
&+\sum_{(m,n)\in T_3\cup T_4}(|a_{mn}|+|a_{m,n+1}|+|a_{m+1,n}|+|a_{m+1,n+1}|)\\
&\lesssim 1+\sum_{(m,n)\in T_2\cup T_5}(\Delta^{01}a_{mn}+\Delta^{01}a_{m+1,n})+\sum_{m=1}^{\infty}m^{-3-\gamma}2^{-m(m+1)}\\
&\lesssim 1+\sum_{m=1}^{\infty}\sum_{t=0}^{\infty}2^{-(m+t)^2-3(m+t)}+\sum_{(m,n)\in T_2}\Delta^{01}a_{m+1,n}\\
&\lesssim 1+\sum_{m=1}^{\infty}m^{-3-\gamma}2^{-m(m-1)}<\infty.
\end{align*}
Let us stick to the case of an odd $f$, as for cosine series the argument is exactly the same. 
 Denote for $m,n\geq 1$,
\begin{align*}
c_{mn}:=\begin{cases} a_{mn}, & \text{if}\;\; \log_2 n\geq m(m+1)p',\\
0, & \text{otherwise},
\end{cases},
\end{align*}
and $b_{mn}:=a_{mn}-c_{mn}$. Then
\begin{align*}
\|f\|_{L_{w(p,q)}^q}&\leq \Big\|\sum_{m,n=1}^{\infty}b_{mn}\sin mx\sin ny\Big\|_{L_{w(p,q)}^q}+\Big\|\sum_{m,n=1}^{\infty}c_{mn}\sin mx\sin ny\Big\|_{L_{w(p,q)}^q}.
\end{align*}
Note that
\begin{align*}
\sum_{m=1}^M\sum_{n=1}^N b_{mn}\sin mx\sin ny&=\sum_{m=1}^{M}\sin mx\Big(\sum_{n=1}^{N-1}\Delta^{01}b_{mn}D_n(y) +b_{mN}D_N(y)\Big)\\
&=\sum_{m=1}^{M-1}\sum_{n=1}^{N-1}\Delta^{11}b_{mn} D_m(x)D_n(y)+\sum_{n=1}^{N-1}\Delta^{01}b_{Mn} D_M(x)D_n(y)\\
&+\sum_{m=1}^{M-1}\Delta^{10}b_{mN} D_m(x)D_N(y)+b_{MN}D_M(x)D_N(y)\\
&=:\sum_{m=1}^{M-1}\sum_{n=1}^{N-1}\Delta^{11}b_{mn} D_m(x)D_n(y)+A_1+A_2+A_3.
\end{align*}
Since $\|D_k\|_{L_{w(p,q)}^q}^q\asymp \sum_{l=1}^k l^{\frac{q}{p'}-1}\asymp k^{\frac{q}{p'}}$ by Theorem \ref{sagh}, we have for $N_0:=\max(N-1,\lceil 2^{M(M+1)p'}\rceil -1)$,
\begin{align*}
\|A_1\|_{L_{w(p,q)}^q}\lesssim \sum_{n=1}^{N_0}M^{-3-\gamma}n^{-1-\frac{1}{p'}}(Mn)^{\frac{1}{p'}}+M^{-3-\gamma}N_0^{-\frac{1}{p'}}(MN_0)^{\frac{1}{p'}}\lesssim M^{-1-\gamma}\to 0
\end{align*}
as $M\to\infty$. For $M_0:=\min\{m:m(m+1)p'\geq N\}$, 
\begin{align*}
\|A_2\|_{L_{w(p,q)}^q}\lesssim \sum_{m= M_0}^{M-1}m^{-4-\gamma}N^{-\frac{1}{p'}}(mN)^{\frac{1}{p'}}+M_0^{-3-\gamma}N^{-\frac{1}{p'}}(M_0N)^{\frac{1}{p'}}\to 0
\end{align*}
as $N\to\infty$. And finally,
\begin{align*}
\|A_3\|_{L_{w(p,q)}^q}\lesssim M^{-3-\gamma}N^{-\frac{1}{p'}}(MN)^{\frac{1}{p'}}\to 0
\end{align*}
as $M\to\infty$. Thus,
\begin{align}\label{add_a}
\Big\|\sum_{m,n=1}^{\infty} b_{mn}\sin mx\sin ny\Big\|_{L_{w(p,q)}^q}=\Big\|\sum_{m,n=1}^{\infty}\Delta^{11}b_{mn} D_m(x)D_n(y)\Big\|_{L_{w(p,q)}^q}.
\end{align}
Besides,
\begin{align*}
\sum_{m=1}^M\sum_{n=1}^N c_{mn}\sin mx\sin ny&=\sum_{m=1}^{M}\sin mx\Big(\sum_{n=1}^{N-1}\Delta^{01}c_{mn}D_n(y) +c_{mN}D_N(y)\Big),
\end{align*}
where in light of the inequalities $0<g_1(n)\leq ...\leq g_{M_0}(n)$ 
 for $M_0$ defined as above 
\begin{align*}
\Big\|\sum_{m=1}^{M}c_{mN}\sin mxD_N(y)\Big\|_{L_{w(p,q)}^q}\lesssim \sum_{m=1}^{M_0} |c_{mN}|N^{\frac{1}{p'}}\leq M_0g_{M_0}(N)N^{\frac{1}{p'}}\lesssim M_0^{-2}\to 0
\end{align*}
as $N\to\infty$. Hence, 
\begin{align}\label{add_b}
\Big\|\sum_{m,n=1}^{\infty} c_{mn}\sin mx\sin ny\Big\|_{L_{w(p,q)}^q}=\Big\|\sum_{m,n=1}^{\infty}\Delta^{01}c_{mn} \sin mxD_n(y)\Big\|_{L_{w(p,q)}^q}.
\end{align}
Combining \eqref{add_a} and \eqref{add_b} we arrive at
\begin{align*}
\|f\|_{L_{w(p,q)}^q}&\leq \Big\|\sum_{m,n=1}^{\infty}\Delta^{11}b_{mn}D_m(x)D_n(y)\Big\|_{L_{w(p,q)}^q}+\Big\|\sum_{m,n=1}^{\infty}\Delta^{01}c_{mn}\sin mx D_n(y)\Big\|_{L_{w(p,q)}^q}=:S_1+S_2.
\end{align*}
First, for $n_{m}=\lceil 2^{m(m+1)p'}\rceil -1$, we see that $\log n_m\asymp m^2$ and $\log n_{m+1}-\log n_m\asymp m$, so
\begin{align*}
S_1&\lesssim \sum_{m=1}^{\infty}m^{\frac{1}{p'}}\Big(\sum_{n=1}^{n_m-1}\Delta^{11}(m^{-3-\gamma}n^{-\frac{1}{p'}})n^{\frac{1}{p'}} +\sum_{n=n_m}^{n_{m+1}-1}\Delta^{01}((m+1)^{-3-\gamma}n^{-\frac{1}{p'}})n^{\frac{1}{p'}}+(m^{-3-\gamma}n_m^{-\frac{1}{p'}})n_m^{\frac{1}{p'}}\Big)\\
&\lesssim\sum_{m=1}^{\infty}m^{\frac{1}{p'}}\Big(\sum_{n=1}^{n_m-1}m^{-4-\gamma}n^{-1}+\sum_{n=n_m}^{n_{m+1}-1}m^{-3-\gamma}n^{-1} +m^{-3-\gamma}\Big)\lesssim \sum_{m=1}^{\infty}m^{\frac{1}{p'}-2-\gamma}<\infty.
\end{align*}
Second, denoting $n_{mt}:=\lceil2^{((m+t)^2+3m+t)p'}\rceil -1$, using $c_{mn}=(-1)^{\delta_m}|c_{mn}|$ and the fact that $\Delta^{01}c_{mn}\neq 0$ only if $n=n_{mt}$ for $t\geq -1$, we get for $q\geq p$, 
\begin{align}\label{befkh}
S_2^q&=\Big\|\sum_{m=1}^{\infty}(-1)^{\delta_m}\sin mx\sum_{t=-1}^{\infty}\Delta^{01}|c_{m,n_{mt}}|D_{n_{mt}}(y)\Big\|_{L_{w(p,q)}^q}^q\nonumber\\
&=\int\limits_{-\pi}^{\pi}|y|^{\frac{q}{p}-1}\int\limits_{-\pi}^{\pi}|x|^{\frac{q}{p}-1}\Big|\sum_{m=1}^{\infty}(-1)^{\delta_m}\sin mx\sum_{t=-1}^{\infty}\Delta^{01}|c_{m,n_{mt}}|D_{n_{mt}}(y)\Big|^qdx dy\nonumber\\
&\leq\int\limits_{-\pi}^{\pi}|y|^{\frac{q}{p}-1}\int\limits_{-\pi}^{\pi}\Big|\sum_{m=1}^{\infty}(-1)^{\delta_m}\sin mx\sum_{t=-1}^{\infty}\Delta^{01}|c_{m,n_{mt}}|D_{n_{mt}}(y)\Big|^qdx dy.
\end{align}
By the Khintchine inequality (see e.g. \cite[Rem. 1.4]{A}) 
 we have for any real sequence $\{s_k\}\in l_2$ and the system of Rademacher functions $\{r_n(t)\}$ that
\begin{align*}
\int\limits_0^1\Big|\sum_{k=1}^{\infty}s_kr_k(t)\Big|^q\asymp_q \Big(\sum_{k=1}^{\infty}s_k^2\Big)^{\frac{q}{2}},
\end{align*}
whence
\begin{align}\label{hh}
\int\limits_0^1\int\limits_{-\pi}^{\pi}&\Big|\sum_{m=1}^{\infty}r_m(t)\sin mx\sum_{t=-1}^{\infty}\Delta^{01}|c_{m,n_{mt}}|D_{n_{mt}}(y)\Big|^qdxdt\nonumber\\
&\lesssim\int\limits_0^1\Big|\sum_{m=1}^{\infty}r_m(t)\sum_{t=-1}^{\infty}\Delta^{01}|c_{m,n_{mt}}|D_{n_{mt}}(y)\Big|^qdt\lesssim \Big(\sum_{m=1}^{\infty}\Big(\sum_{t=-1}^{\infty}\Big|\Delta^{01}|c_{m,n_{mt}}| D_{n_{mt}}(y)\Big|\Big)^2\Big)^{\frac{q}{2}},
\end{align}
whenever the series on the right-hand side converges. Note that by the Minkowski inequality and the fact that $\|D_{n_{mt}}\|_{L_{w(p,q)}^q}\asymp 2^{((m+t)^2+3m+t)p'\frac{1}{p'}}$, we have
\begin{align}\label{hh2}
\int\limits_{-\pi}^{\pi}|y|^{\frac{q}{p}-1}\Big(&\sum_{m=1}^{\infty}\Big(\sum_{t=-1}^{\infty}\Big|\Delta^{01}|c_{m,n_{mt}}| D_{n_{mt}}(y)\Big|\Big)^2\Big)^{\frac{q}{2}}dy\nonumber\\
\asymp \Big\|&\sum_{m=1}^{\infty}\Big(\sum_{t=-1}^{\infty}\Big|\Delta^{01}|c_{m,n_{mt}}|D_{n_{mt}}(y)\Big|\Big)^2\Big\|_{L_{w(p/2,q/2)}^{q/2}}^{\frac{q}{2}}\nonumber\\
\lesssim \Big(&\sum_{m=1}^{\infty}\Big\|\sum_{t=-1}^{\infty}\Big|\Delta^{01}|c_{m,n_{mt}}|D_{n_{mt}}(y)\Big|\Big\|_{L_{w(p,q)}^q}^2\Big)^{\frac{q}{2}}\nonumber\\
\lesssim\Big(&\sum_{m=1}^{\infty}m^{-2\gamma}\Big(2^{-m^2-3m}2^{m(m+1)}+\sum_{t=0}^{\infty}2^{-((m+t)^2+3(m+t))}2^{((m+t)^2+3m+t)}\Big)^2\Big)^{\frac{q}{2}}\nonumber\\
\lesssim \Big(&\sum_{m=1}^{\infty}m^{-\frac{2}{p'}}\Big)^{\frac{q}{2}}<\infty.
\end{align}
Thus, by \eqref{hh} and \eqref{hh2}, for almost all $t$, the sum $\sum_{m=1}^{\infty}r_m(t)\sin mx\sum_{t=-1}^{\infty}\Delta^{01}|c_{m,n_{mt}}|D_{n_{mt}}(y)$ converges for almost all $y$ uniformly in $x$, and moreover, \eqref{hh} and \eqref{hh2} imply that
\begin{align*}
\int\limits_{-\pi}^{\pi}|y|^{\frac{q}{p}-1}&\int\limits_{-\pi}^{\pi}\Big|\sum_{m=1}^{\infty}r_m(t)\sin mx\sum_{t=-1}^{\infty}\Delta^{01}|c_{m,n_{mt}}|D_{n_{mt}}(y)\Big|^qdx dy<\infty
\end{align*}
for almost all $t$ (denote this set by $E\subset (0,1)$). Taking any $t_0\in E\setminus\{k2^{-l}\}_{k,l\in\mathbb{N},k<2^l},$ so that $r_m(t_0)=\pm 1$ for all $m$, and setting $\{\delta_m\}$ according to the equality $(-1)^{\delta_m}=r_m(t_0)$, we obtain in light of \eqref{befkh} that $S_2<\infty$.
\end{proof}

\begin{acknowledgements} I am grateful to Mikhail Dyachenko and Sergey Tikhonov for acquainting me with this topic and for the many issues I have learnt from our conversations. I also thank Erlan Nursultanov, Miquel Saucedo, and the anonymous referees for their valuable comments and suggestions.
\end{acknowledgements}

\end{document}